\documentclass{article}

\usepackage{amsmath,amssymb,amsthm}

\theoremstyle{definition} \newtheorem{definition}{Definition}
\theoremstyle{definition} \newtheorem{hypothesis}{Hypothesis}
\theoremstyle{definition} \newtheorem*{remark}{Remark}
\theoremstyle{plain}    \newtheorem{theorem}{Theorem}
                          
                          \newtheorem{lemma}[theorem]{Lemma}

\setlength\parindent{0pt}

\title{Well-posedness for Stochastic Evolution Equations with Monotone Non-linearity and Multiplicative Poisson Noise in $L^p$}

\author{Erfan Salavati\footnote{This research was supported by a grant from IPM.}\\
Faculty of Mathematics and Computer Science\\
Amirkabir University of Technology (Tehran Polytechnic)\\
School of Mathematics,\\
Institute for Research in Fundamental Sciences (IPM),\\
P.O. Box: 19395-5746, Tehran, Iran\\
Bijan Z. Zangeneh\\
Department of Mathematical Sciences\\
Sharif University of Technology\\
Tehran, Iran}
\date{}

\begin{document}
\maketitle

\begin{abstract}
Semilinear stochastic evolution equations with L\'evy noise and monotone nonlinear drift are considered. The existence and uniqueness of the mild solutions in $L^p$ for these equations is proved and a sufficient condition for exponential asymptotic stability of the solutions is derived. The main tool in our study is an It\^o type inequality for the $p$th power of stochastic convolution integrals in Hilbert spaces.
\end{abstract}

\vspace{2mm}
\noindent{Mathematics Subject Classification: 60H10, 60H15, 60G51, 47H05, 47J35.}
\vspace{2mm}

\noindent{Keywords: Stochastic Convolution Integral, It\^o type inequality, Stochastic Evolution Equation, Monotone Operator, L\'evy Noise.}

\section{Introduction}\label{section: introduction}

In this article we are concerned with the $L^p$ well-posedness (existence, uniqueness and continuous dependence on initial conditions) of the equation,
\begin{equation}\label{Poisson_Noise}
dX_t=AX_t dt+f(t,X_t) dt + \int_E k(t,\xi,X_{t-}) \tilde{N}(dt,d\xi),
\end{equation}
on a Hilbert space $H$, where $A$ in the generator of a $C_0$-semigroup of operators on $H$, $f$ is a non-linear and dissipative (semi-monotone) operator on $H$, $\tilde{N}$ is a compensated Poisson random measure and $k$ is Lipschitz. We assume the linear growth consition on $f$ and $k$.

We review the related works in the literature. There are many works for the case of Wiener noise, i.e the equation
\[     dX_t=AX_t dt+f(t,X_t) dt + g(t,X_t) dW_t, \]
where $A$ in the generator of a $C_0$-semigroup and $f$ is a non-linear and dissipative operator. This equation has been studied by two method. First is the variational method for which see Krylov and Rosovski~\cite{Krylov-Rozovskii} and also Pardoux~\cite{Pardoux}. Second is the semigroup approach for which see Da Prato and Zabczyk~\cite{DaPrato_Zabczyk_book} and also Zangeneh~\cite{Zangeneh-Paper}. The $L^p$ stability gas been established by Jahanipour and Zangeneh~\cite{Jahanipour-Zangeneh}.

To mention some recent advances for the case of Wiener noise, Barbu~\cite{Barbu-semilinear} has studied this equation in the special case of $H=L^2(D)$ where $D$ is a domain in $\mathbb{R}^d$ and $A$ being the Laplacian and $f$ being a semi-monotone evaluation operator. He does not assume any polynomial growth assumption on $f$. The idea is to reduce the equation to a deterministic equation with random coefficient. Marinelli~\cite{Marinelli-Lp} has studied this equation (with Wiener noise) on the Banach space $L^q(D)$ and $f$ being a semi-monotone evaluation operator. He assumes some polynomial growth on the condition. He uses the stochastic analysis on Banach spaces.

The case of Poisson noise has been less studied. We mention some works that are most related to our work. For more, see the monograph~\cite{Peszat-Zabczyk} and the references therein. There are some works considering general noise (including Poisson noise) and $L^2$ theory, see for example~\cite{Kotelenez-1982} for the Lipschitz coefficients and and~\cite{Gyongy} for the variational approach to the monotone coefficients. They have studied the well-posedness of solutions. See also~\cite{Peszat-Zabczyk} for the monotone coefficients and the additive noise. The semigroup approach for the monotone coefficients and Poisson noise has been studied by Salavati and Zangeneh~\cite{Salavati_Zangeneh:existence and uniqueness} and~\cite{Salavati-Zangeneh-continuity} and the $L^2$-well-posedness has been established. A recent work considering $L^p$ theory is~\cite{Marinelli-Rockner-wellposedness} which proves the well-posedness and asymptotic behaviour for solutions of~\eqref{Poisson_Noise} on certain $L^q(D)$ spaces where $f$ is the evaluation operator associated with a decreasing function.

Our main contribution is to prove the $L^p$-well-posedness of the mild solution of~\eqref{Poisson_Noise} in general Hilbert space $H$ which to the best of our knowledge, has not been proved before in this generality. To see some concrete applications of these results to stochastic partial differential equations and stochastic delay equations see~\cite{Salavati-Zangeneh-continuity}.

The rest of the text is organized as follows. In section~\ref{sec:convolution} we state some tools about the stochastic convolution integrals which we need in our proofs. The main tool is the Ito type inequality for $p$th power which gives a pathwise bound for the $p$th power of the norm of stochastic convolution integrals. In section~\ref{sec: Semilinear SEE} we prove our main results. We prove the existence of the mild solution in $L^p$ in Theorem~\ref{theorem: existence in L^p}. The precise assumptions on coefficients will be stated in this section. We also prove an auxiliary result known as the Bichteler-Jacod inequality in Hilbert spaces proved in Theorem~\ref{theorem: maximal inequality Poisson}. This result has been stated and proved before in the literature, for example in~\cite{Marinelli-Prevot-Rockner}, but we give a new proof for it. We also show the continuous dependence on initial condition and provide a sufficient condition for exponential stability in $L^p$ for the mild solution in Theorem~\ref{theorem: stability}.

\section{Stochastic Convolution Integrals}\label{sec:convolution}

In this section, we review some preliminaries on stochastic convolution integrals and introduce the main tools for the next section.

Stochastic convolution integrals are the solutions of the simplest stochastic evolution equations, i.e. the linear evolution equations with additive noise,
\[ dX_t = AX_t dt + dM_t \]

Let $H$ be a separable Hilbert space with inner product $\langle \, , \, \rangle$. Let $S_t$ be a $C_0$ semigroup on $H$ with infinitesimal generator $A:D(A)\to H$. Furthermore we assume the exponential growth condition on $S_t$, i.e. there exists a constant $\alpha$ such that $\| S_t \| \le e^{\alpha t}$. If $\alpha=0$, $S_t$ is called a contraction semigroup. Stochastic convolution integrals are integrals of type $X_t=\int_0^t S_{t-s} dM_s$ where $M_t$ is a martingale.

The reason for importance of stochastic convolution integrals is that the stochastic evolution equation,
\[ dX_t = AX_t dt + f(X_t) dt + g(X_t) dM_t \]
is written in the following convolution integral form, in order to have solution,
\[ X_t = S_t X_0 + \int_0^t S_{t-s} f(X_s) ds +\int_0^t S_{t-s} g(X_s) dM_s \]
The solutions thus performed are called mild solutions. In this article we are concerned mainly with the well-posedness of mild solution.

Stochastic convolution integrals in Hilbert spaces are different from stochastic integrals in some ways. For example, they are not semi-martingales and hence the tools which have been developed for stochastic integrals are not applicable to them. These include the regularity properties and maximal inequalities.

Kotelenez~\cite{Kotelenez-1982} and~\cite{Kotelenez-1984} gives a maximal inequality for stochastic convolution integrals and also proves the existence of a c\`adl\`ag modification of them. The same estimates for the expectation of the maximum norm of stochastic convolution integrals are given independently by Ichikawa~\cite{Ichikawa} but because of the presence of monotone nonlinearity in our equation, they are not applicable and we need a pathwise bound for stochastic convolution integrals. For this reason we state the following pathwise inequality for the norm of stochastic convolution integrals which has been proved in Salavati and Zangeneh~\cite{Salavati-Zangeneh-pth_moment}.

\begin{theorem}[It\^o type Inequality for $p$th power]\label{theorem: Ito type inequality for pth power}
    Let $p\ge 2$. Assume $Z(t)=V(t) + M(t)$ is a semimartingale where $V(t)$ is an $H$-valued process with finite variation $|V|(t)$ and $M(t)$ is an $H$-valued square integrable martingale with quadratic variation $[M](t)$. Assume that
        \[ \mathbb{E} [M](T)^{\frac{p}{2}} <\infty \qquad \qquad \mathbb{E}|V|(T)^p <\infty \]
    Let $X_0(\omega)$ be $\mathcal{F}_0$ measurable and square integrable. Define $X(t)=S(t) X_0 + \int_0^t S(t-s) dZ(s)$. Then we have
    \begin{multline*}
        \|X(t)\|^p \le e^{p\alpha t}\|X_0\|^p + p\int_0^t e^{p\alpha (t-s)} \|X(s^-)\|^{p-2} \langle X(s^-) , dZ(s) \rangle \\
         + \frac{1}{2}p(p-1)\int_0^t e^{p\alpha (t-s)} \|X(s^-)\|^{p-2} d[M]^c(s)\\
         + \sum_{0\le s\le t} e^{p\alpha (t-s)} \left( \|X(s)\|^p -\|X(s^-)\|^p - p \|X(s^-)\|^{p-2} \langle X(s^-) , \Delta Z(s) \rangle \right)
    \end{multline*}
\end{theorem}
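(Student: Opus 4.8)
The plan is to remove the obstruction that $X(t)$ is not itself a semimartingale by approximating the generator $A$ with its Yosida approximation. For $n$ large, set $A_n = n A (nI-A)^{-1} = n^2(nI-A)^{-1} - nI$, which is a \emph{bounded} operator generating the uniformly continuous semigroup $S^{(n)}(t)=e^{tA_n}$. Standard semigroup theory gives three facts I will use: $S^{(n)}(t)\to S(t)$ strongly and uniformly on compact time intervals; the growth bound $\|S^{(n)}(t)\|\le e^{\alpha_n t}$ with $\alpha_n\to\alpha$; and the dissipativity estimate $\langle A_n x,x\rangle\le \alpha_n\|x\|^2$ for all $x\in H$ (the Yosida analogue of the Lumer--Phillips bound $\langle Ax,x\rangle\le\alpha\|x\|^2$). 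Because $A_n$ is bounded, the process $X_n(t)=S^{(n)}(t)X_0+\int_0^t S^{(n)}(t-s)\,dZ(s)$ is the genuine solution of the stochastic differential equation $dX_n(t)=A_nX_n(t)\,dt+dZ(t)$, hence a true $H$-valued semimartingale to which the Hilbert-space It\^o formula applies. Note that the continuous martingale part of $X_n$ equals that of $Z$, so $[X_n]^c=[M]^c$, and since the drift $A_nX_n\,dt$ is continuous the jumps satisfy $\Delta X_n(s)=\Delta Z(s)$.

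Next I would apply It\^o's formula to the function $\phi(x)=\|x\|^p$, which is $C^2$ for $p\ge 2$, with $D\phi(x)=p\|x\|^{p-2}x$ and $D^2\phi(x)[h,h]=p\|x\|^{p-2}\|h\|^2+p(p-2)\|x\|^{p-4}\langle x,h\rangle^2$. To produce the exponential weights directly, I would instead differentiate the discounted process $e^{-p\alpha_n t}\|X_n(t)\|^p$, combining It\^o's formula with integration by parts against the continuous finite-variation factor $e^{-p\alpha_n t}$. The drift contribution $p\|X_n\|^{p-2}\langle X_n,A_nX_n\rangle\,dt$ together with the term $-p\alpha_n e^{-p\alpha_n t}\|X_n\|^p\,dt$ is $\le 0$ by the dissipativity estimate, so it is discarded to create the inequality. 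For the second-order term, Cauchy--Schwarz gives $D^2\phi(x)[h,h]\le p(p-1)\|x\|^{p-2}\|h\|^2$ (here $p-2\ge0$ is used), so taking the trace against $d[M]^c$ yields the coefficient $\tfrac12 p(p-1)$ and the correct inequality direction. Since $e^{-p\alpha_n t}$ is continuous, each jump of the discounted process at time $s$ is $e^{-p\alpha_n s}\big(\|X_n(s)\|^p-\|X_n(s^-)\|^p\big)$, which after subtracting the first-order jump correction reproduces the jump sum with the weight built in. Multiplying the resulting inequality by $e^{p\alpha_n t}$ then gives, for each fixed $n$, exactly the asserted inequality with $\alpha$ replaced by $\alpha_n$ and $Z$-increments and jumps understood through $X_n$.

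Finally I would pass to the limit $n\to\infty$. Using $S^{(n)}\to S$ strongly and uniformly on $[0,T]$ together with the moment hypotheses $\mathbb E[M](T)^{p/2}<\infty$ and $\mathbb E|V|(T)^p<\infty$, the convolutions $X_n$ converge to $X$ (for instance in $L^2(\Omega;D([0,T];H))$, hence along a subsequence almost surely in the Skorokhod sense), which transfers to $X_n(s^-)\to X(s^-)$ and to the nonlinear coefficients $\|X_n(s^-)\|^{p-2}$. One then checks term by term that the stochastic convolution integral $p\int_0^t e^{p\alpha_n(t-s)}\|X_n(s^-)\|^{p-2}\langle X_n(s^-),dZ(s)\rangle$, the $[M]^c$-integral, and the jump sum converge to their counterparts with $X$ in place of $X_n$ and $\alpha$ in place of $\alpha_n$; the integrability furnished by the hypotheses makes these limits legitimate via It\^o isometry and dominated convergence. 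Since $\Delta X_n(s)=\Delta Z(s)$ throughout, the jump term passes to the stated form involving $\langle X(s^-),\Delta Z(s)\rangle$. As the inequality holds for each $n$ and inequalities are stable under these limits, the theorem follows. The main obstacle is precisely this limit passage: controlling the convergence of the stochastic integral and of the infinite jump sum uniformly enough that the pathwise inequality survives, and handling the nonlinear dependence on $X_n(s^-)$ inside $\|\cdot\|^{p-2}$; the moment assumptions are exactly what is needed to carry this out.
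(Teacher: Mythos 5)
The paper never proves Theorem~\ref{theorem: Ito type inequality for pth power}: it is imported, with attribution, from \cite{Salavati-Zangeneh-pth_moment}, so there is no internal proof to compare you against; I therefore assess your argument on its own terms.

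Your per-approximation argument is correct, and it is the classical route (it is how Ichikawa~\cite{Ichikawa} treats the case $p=2$). The Yosida facts you invoke all check out: $\|e^{tA_n}\|\le e^{\alpha_n t}$ with $\alpha_n=n\alpha/(n-\alpha)\to\alpha$, and differentiating $\|e^{tA_n}x\|^2$ at $t=0$ then gives $\langle A_n x,x\rangle\le\alpha_n\|x\|^2$; since $A_n$ is bounded, $X_n$ is a genuine semimartingale with $\Delta X_n=\Delta Z$ and $[X_n]^c=[M]^c$; $\|\cdot\|^p$ is $C^2$ for $p\ge 2$ with $D^2\phi(x)[h,h]\le p(p-1)\|x\|^{p-2}\|h\|^2$. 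So It\^o's formula for the discounted process, discarding the nonpositive drift, does yield the asserted inequality with $\alpha_n$ in place of $\alpha$, for each fixed $n$.

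The genuine gap is the limit passage, which you correctly flag as ``the main obstacle'' but then dispatch with reasoning that does not work as stated. Strong convergence $S^{(n)}\to S$ plus the moment hypotheses gives, via the It\^o isometry and dominated convergence, only the fixed-$t$ statement $\mathbb{E}\|X_n(t)-X(t)\|^2\to 0$. What the term-by-term limits actually require is an a.s.\ (subsequential) bound $\sup_n\sup_{t\le T}\|X_n(t)\|<\infty$ together with convergence of the left limits $X_n(s^-)\to X(s^-)$ at every jump time, i.e.\ \emph{uniform-in-time} convergence. This cannot be obtained by applying a Kotelenez- or Burkholder-type maximal inequality to $X_n-X$, because the difference of convolutions driven by two different semigroups is not itself a stochastic convolution; establishing uniform convergence of the Yosida approximants in the c\`adl\`ag setting is precisely the content of Kotelenez's regularity theorem~\cite{Kotelenez-1982} (alternatively the dilation argument in~\cite{Peszat-Zabczyk}, Ch.~9), and it must be either cited or proved. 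Convergence ``in the Skorokhod sense'' is not an adequate substitute: $J_1$ convergence does not by itself give $X_n(s^-)\to X(s^-)$ at fixed jump times. Note also that once uniform convergence is in hand, the jump sum needs a domination argument you omit: each summand is nonnegative by convexity and is bounded, by the Lemma~\ref{lemma: inequality in proof of Ito-type pth power}-type estimate, by $C(\omega)\|\Delta Z(s)\|^2$, which is summable because $\sum_{s\le t}\|\Delta Z(s)\|^2\le 2[M](t)+2|V|(t)^2<\infty$; and the stochastic-integral term needs the dominated convergence theorem for stochastic integrals with the predictable dominating process $\sup_n\|X_n(s^-)\|^{p-1}$. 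In short: the skeleton is sound and completable, but as written the proof is missing its hardest ingredient --- a uniform convergence theorem for Yosida approximations of c\`adl\`ag stochastic convolutions --- and the justification offered in its place would fail.
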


We will need also the following inequality which is an analogous of Burkholder-Davies-Gundy inequality for stochastic convolution integrals.

\begin{theorem}[Burkholder Type Inequality, Zangeneh~\cite{Zangeneh-Paper}, Theorem 2, page 147]\label{theorem:Burkholder type inequality}
    Let $p\ge 2$ and $T>0$. Let $S_t$ be a contraction semigroup on $H$ and $M_t$ be an $H$-valued square integrable c\`adl\`ag martingale for $t\in[0,T]$. Then
    \begin{equation*}
        \mathbb{E}\sup\limits_{0 \le t \le T}\|{\int_0^t S_{t-s}dM_s}\|^p \le K_p \mathbb{E}([M]_T^\frac{p}{2})
    \end{equation*}
    where $K_p$ is a constant depending only on $p$.
\end{theorem}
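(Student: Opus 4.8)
The plan is to derive the inequality directly from the It\^o type inequality of Theorem~\ref{theorem: Ito type inequality for pth power}. Since $S_t$ is a contraction semigroup we may take $\alpha=0$, and we apply that theorem to the pure martingale $Z(t)=M(t)$, so that the finite-variation part $V$ and the initial datum $X_0$ both vanish. Writing $X(t)=\int_0^t S(t-s)\,dM(s)$, the inequality collapses to
\begin{multline*}
\|X(t)\|^p \le p\int_0^t \|X(s^-)\|^{p-2}\langle X(s^-),dM(s)\rangle + \tfrac12 p(p-1)\int_0^t \|X(s^-)\|^{p-2}\,d[M]^c(s) \\
+ \sum_{0\le s\le t}\left(\|X(s)\|^p - \|X(s^-)\|^p - p\|X(s^-)\|^{p-2}\langle X(s^-),\Delta M(s)\rangle\right).
\end{multline*}
The goal is to take the supremum over $t\in[0,T]$, take expectations, and show that every term on the right is controlled by $\mathbb{E}[M]_T^{p/2}$ together with a small multiple of $\mathbb{E}\sup_t\|X(t)\|^p$ that can be absorbed into the left-hand side.

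Next I would estimate the three terms separately. The first is the supremum of the real-valued local martingale $N(t)=\int_0^t\|X(s^-)\|^{p-2}\langle X(s^-),dM(s)\rangle$; applying the classical scalar Burkholder--Davis--Gundy inequality together with the pointwise bound $d[N](s)\le \|X(s^-)\|^{2(p-1)}\,d[M](s)$ gives $\mathbb{E}\sup_t|N(t)| \le C\,\mathbb{E}\big[\sup_s\|X(s)\|^{p-1}[M]_T^{1/2}\big]$. The continuous-bracket term is bounded directly by $\tfrac12 p(p-1)\,\mathbb{E}\big[\sup_s\|X(s)\|^{p-2}[M]_T\big]$. For the jump sum I would invoke the elementary Taylor-type inequality $\big|\|a+b\|^p-\|a\|^p-p\|a\|^{p-2}\langle a,b\rangle\big|\le C_p(\|a\|^{p-2}\|b\|^2+\|b\|^p)$, valid for $p\ge2$, with $a=X(s^-)$ and $b=\Delta M(s)$; summing over jumps and using $\sum_s\|\Delta M(s)\|^2\le [M]_T$ and $\sum_s\|\Delta M(s)\|^p\le [M]_T^{p/2}$ yields a bound of the same shape, namely $C_p\,\mathbb{E}\big[\sup_s\|X(s)\|^{p-2}[M]_T\big]+C_p\,\mathbb{E}[M]_T^{p/2}$.

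With these in hand, every cross term has the form $\mathbb{E}\big[\sup_s\|X(s)\|^{p-\theta}[M]_T^{\theta/2}\big]$ with $\theta\in\{1,2\}$. I would split each by H\"older's inequality into $(\mathbb{E}\sup_s\|X(s)\|^p)^{(p-\theta)/p}(\mathbb{E}[M]_T^{p/2})^{\theta/p}$ and then apply Young's inequality to dominate it by $\varepsilon\,\mathbb{E}\sup_s\|X(s)\|^p + C_{\varepsilon,p}\,\mathbb{E}[M]_T^{p/2}$. Choosing $\varepsilon$ small enough that the aggregate coefficient of $\mathbb{E}\sup_s\|X(s)\|^p$ is strictly below $1$, I subtract this term from both sides to obtain $\mathbb{E}\sup_{0\le t\le T}\|X(t)\|^p\le K_p\,\mathbb{E}[M]_T^{p/2}$.

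The main obstacle is that this absorption step is legitimate only once $\mathbb{E}\sup_t\|X(t)\|^p$ is known to be finite; otherwise the term cannot be cancelled. I would therefore first run the whole argument with $M$ replaced by the stopped martingale $M^{\tau_n}$ for a localizing sequence of stopping times $\tau_n$ that makes $\sup_t\|X^{\tau_n}(t)\|$ and $[M^{\tau_n}]_T$ bounded, so that all quantities are finite and the local martingale $N$ is a genuine martingale. The resulting uniform bound, combined with monotone convergence as $\tau_n\uparrow\infty$, delivers the claim; I may assume $\mathbb{E}[M]_T^{p/2}<\infty$, since otherwise the inequality is trivial. A secondary technical point is the verification of the elementary inequality for the jump terms, which follows from a second-order Taylor expansion of $x\mapsto\|x\|^p$ and is an exact identity when $p=2$.
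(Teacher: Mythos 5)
The paper itself offers no proof of this theorem: it is imported verbatim from Zangeneh~\cite{Zangeneh-Paper} (Theorem 2, p.~147), so your derivation from the It\^o type inequality (Theorem~\ref{theorem: Ito type inequality for pth power}) is a new, self-contained route rather than a reconstruction of an argument in the text. Most of it is sound: taking $Z=M$, $V=0$, $X_0=0$; the bound $d[N](s)\le\|X(s^-)\|^{2(p-1)}\,d[M](s)$ followed by scalar Burkholder--Davis--Gundy for the martingale term; the crude bound for the continuous-bracket term; the Taylor-type inequality for the jumps together with $\sum_s\|\Delta M(s)\|^2\le[M]_T$ and $\sum_s\|\Delta M(s)\|^p\le[M]_T^{p/2}$; and the H\"older--Young absorption. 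All of these steps are correct.

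The genuine gap is the localization, which you correctly flag as the main obstacle but then dispose of too quickly. You ask for stopping times $\tau_n$ making $\sup_t\|X^{\tau_n}(t)\|$ and $[M^{\tau_n}]_T$ \emph{bounded}; neither can be bounded, since the jump at $\tau_n$ is unbounded (this part is harmless: finiteness of $p$th moments suffices). The serious point is that for stochastic convolutions, stopping the driving martingale is not the same as stopping the process: $\int_0^t S(t-s)\,dM^{\tau}(s)=\int_0^{t\wedge\tau}S(t-s)\,dM(s)\ne X(t\wedge\tau)$, and no hitting time built from $M$ or $[M]$ gives \emph{any} a priori control of $\mathbb{E}\sup_t\|\int_0^t S(t-s)\,dM^{\tau_n}(s)\|^p$ --- a bound of the convolution's supremum in terms of the size of the stopped martingale is exactly the kind of statement being proven, so postulating such a localizing sequence begs the question. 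The repair is to localize through $X$ itself: since $X$ has c\`adl\`ag paths (a nontrivial cited fact, Kotelenez~\cite{Kotelenez-1982}), $\sigma_n=\inf\{t:\|X(t)\|\ge n\}\wedge T$ is a stopping time, and $\sup_{t\le\sigma_n}\|X(t)\|\le n+\|\Delta M(\sigma_n)\|$ has finite $p$th moment because $\|\Delta M(\sigma_n)\|^p\le[M]_T^{p/2}$. Then either (i) evaluate the pathwise inequality of Theorem~\ref{theorem: Ito type inequality for pth power} at $t\wedge\sigma_n$ --- it holds for every $t$, so stopping \emph{after} applying it is legitimate --- and run your estimates with all suprema of $\|X\|$ taken over $[0,\sigma_n]$; or (ii) use the contraction and semigroup properties to get $\int_0^t S(t-s)\,dM^{\sigma_n}(s)=S(t-t\wedge\sigma_n)\,X(t\wedge\sigma_n)$, so the convolution driven by $M^{\sigma_n}$ has the same running supremum as the stopped $X$, and your original plan applies with this specific choice of $\tau_n=\sigma_n$. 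Either way one obtains $\mathbb{E}\sup_{t\le T\wedge\sigma_n}\|X(t)\|^p\le K_p\,\mathbb{E}[M]_T^{p/2}$ uniformly in $n$, and monotone convergence finishes. Without identifying the stopping times as hitting times of $X$ (not of $M$) and explaining why this is compatible with the convolution structure, the absorption step --- and hence the proof --- is not justified.
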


In the next section we will need an estimate for the $L^p$ norm of stochastic integrals with respect to compensated Poisson random measures. For this reason we state and prove the following theorem which is a Bichteler-Jacod inequality for Poisson integrals in infinite dimensions. This theorem is essentially the Lemma 4 of~\cite{Marinelli-Rockner-wellposedness} with an extension to $1\le p\le 2$. We provide a new proof for this theorem based on the Burkholder-Davies -Gundy inequality.

\begin{theorem}[An $L^p$ bound for Stochastic Integrals with Respect to Compensated Poisson Random Measures]\label{theorem: L^p_bound_stochastic_integral}
    Let $p\ge 1$. There exists a real constant denoted by $\mathfrak{C}_p$ such that if $k(t,\xi,\omega)$ is an $H$-valued predictable process then
    \begin{equation} \label{theorem: maximal inequality Poisson}
    \begin{array}{l}
        \mathbb{E} \sup_{0\le t\le T} \left|\int_0^t\int_E k(s,\xi,\omega) \tilde{N}(ds,d\xi)\right|^p \le \\
        \mathfrak{C}_p \left( \mathbb{E} \left( \left(\int_0^T \int_E \left|k(s,\xi,\omega)\right| \nu(d\xi)ds\right)^{p} \right) + \mathbb{E} \int_0^T \int_E \left|k(s,\xi,\omega)\right|^p \nu(d\xi)ds\right)
    \end{array}
    \end{equation}
\end{theorem}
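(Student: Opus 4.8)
The plan is to deduce everything from the Burkholder-type inequality (Theorem~\ref{theorem:Burkholder type inequality}) applied with the trivial semigroup $S_t\equiv\mathrm{Id}$, which is a contraction semigroup, so that $\int_0^t S_{t-s}\,dM_s=M_t$ with $M_t:=\int_0^t\int_E k\,\tilde N(ds,d\xi)$. Since $M$ is a purely discontinuous martingale whose jumps are $\Delta M_s=k(s,\xi)$, its quadratic variation is $[M]_T=\int_0^T\int_E\|k\|^2\,N(ds,d\xi)$, and the theorem yields
\[
\mathbb{E}\sup_{0\le t\le T}\|M_t\|^p\le K_p\,\mathbb{E}\Big(\int_0^T\int_E\|k\|^2\,N(ds,d\xi)\Big)^{p/2}.
\]
Before invoking this I would reduce to the case where the right-hand side of the asserted inequality is finite and $M$ is genuinely square integrable, by truncating $k$ (replacing it by $k\,\mathbf 1_{\{\|k\|\le n\}}$ on sets of finite $\nu$-measure), proving the bound for the truncations, and passing to the limit with Fatou's lemma and monotone convergence; this is routine and I would only sketch it.

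For the base case $1\le p\le 2$ the exponent $p/2$ lies in $(0,1]$, so $x\mapsto x^{p/2}$ is subadditive and $\big(\sum_s\|\Delta M_s\|^2\big)^{p/2}\le\sum_s\|\Delta M_s\|^p=\int_0^T\int_E\|k\|^p\,N(ds,d\xi)$. Taking expectations and using the compensation formula $\mathbb{E}\int\int\|k\|^p N=\mathbb{E}\int\int\|k\|^p\,\nu(d\xi)ds$ yields the inequality with only the second term on the right. This already extends the result to $1\le p\le 2$, which is the novelty claimed over Lemma~4 of~\cite{Marinelli-Rockner-wellposedness}.

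For $p>2$ I would argue by induction, proving the statement for an exponent $q$ from the statement for $q/2$ (the base case $q\in[1,2]$ being the one just handled, and repeated halving eventually landing any $q$ in $[1,2]$). Writing $A:=\int_0^T\int_E\|k\|^2\,\nu(d\xi)ds$ and $\tilde J_t:=\int_0^t\int_E\|k\|^2\,\tilde N(ds,d\xi)$, the identity $\int_0^T\int_E\|k\|^2 N=\tilde J_T+A$ together with the nonnegativity of the left side gives the pathwise bound $\big(\int\int\|k\|^2 N\big)^{p/2}\le 2^{p/2-1}\big(|\tilde J_T|^{p/2}+A^{p/2}\big)$. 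The first term is controlled by applying the induction hypothesis at exponent $p/2$ to the scalar integrand $\|k\|^2$, whose $\nu(d\xi)ds$-integral is $A$ and whose $(p/2)$-th power integrates to $\int\int\|k\|^p\,\nu(d\xi)ds$; this bounds $\mathbb{E}|\tilde J_T|^{p/2}$ by $\mathfrak C_{p/2}\big(\mathbb{E}A^{p/2}+\mathbb{E}\int\int\|k\|^p\,\nu\,ds\big)$.

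It remains to absorb the compensator term $\mathbb{E}A^{p/2}$ into the two terms on the right, and this pathwise estimate is the main point of the argument. For fixed $\omega$, regard $g:=\|k\|$ as a function on $[0,T]\times E$ with the measure $\mu:=\nu(d\xi)\,ds$. Since $1\le 2\le p$, log-convexity of the $L^r(\mu)$ norms (Hölder's inequality) gives $\|g\|_{L^2(\mu)}\le\|g\|_{L^1(\mu)}^{1-\theta}\|g\|_{L^p(\mu)}^{\theta}$ with $\theta=\frac{p}{2(p-1)}\in(0,1)$; raising to the $p$th power and applying the weighted arithmetic-geometric mean inequality $a^{1-\theta}b^{\theta}\le(1-\theta)a+\theta b$ yields
\[
A^{p/2}=\|g\|_{L^2(\mu)}^{\,p}\le\Big(\int_0^T\int_E\|k\|\,\nu(d\xi)ds\Big)^p+\int_0^T\int_E\|k\|^p\,\nu(d\xi)ds.
\]
Taking expectations bounds $\mathbb{E}A^{p/2}$ by exactly the two terms on the right-hand side of the theorem, and collecting the constants from the Burkholder step, the convexity split, the induction hypothesis, and this last inequality produces a single constant $\mathfrak C_p$. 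The main obstacle is precisely this $p>2$ regime: the quadratic variation is naturally an $L^2$-in-$\xi$ quantity, and the recursion together with the interpolation inequality are what convert it into the mixed $L^1$/$L^p$ form required.
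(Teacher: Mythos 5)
Your proposal is correct and takes essentially the same route as the paper's own proof: a Burkholder/BDG bound giving $\mathbb{E}\,[M]_T^{p/2}$, splitting the Poisson integral of $\|k\|^2$ into compensated part plus compensator, an induction that halves the exponent and applies the hypothesis to the scalar integrand $\|k\|^2$, and the identical H\"older-interpolation plus arithmetic--geometric-mean step to absorb the compensator term into the $L^1$/$L^p$ right-hand side. The only differences are cosmetic: you prove the base case $1\le p\le 2$ directly via subadditivity of $x\mapsto x^{p/2}$ and the compensation formula (note this uses the BDG inequality for exponents in $[1,2)$, which is standard for Hilbert-valued martingales but lies outside the stated range $p\ge 2$ of Theorem~\ref{theorem:Burkholder type inequality}), where the paper instead cites Theorem 8.23 of~\cite{Peszat-Zabczyk}, and you make explicit a truncation/localization step that the paper leaves implicit.
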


\begin{proof}
    Assume that $2^n\le p < 2^{n+1}$. We prove by induction on $n$.

    \emph{Basis of induction:} $n=0$. In this case we have $1\le p<2$ and the statement follows from Theorem 8.23 of~\cite{Peszat-Zabczyk}. In fact, in this case we have 
    \[\mathbb{E} \left|\int_0^t\int_E k(s,\xi,\omega) \tilde{N}(ds,d\xi)\right|^p \le 
        \mathfrak{C}_p \mathbb{E} \int_0^t \int_E \left|k(s,\xi,\omega)\right|^p \nu(d\xi)ds\]
    \emph{Induction Step:} Now assume $n\ge 1$ and we have proved the statement for $n-1$. Hence $p\ge 2$. Applying Burkholder-Davies-Gundy inequality we find
    \begin{multline*}
    \mathbb{E} \sup_{0\le t\le T} \left|\int_0^t\int_E k(s,\xi,\omega) \tilde{N}(ds,d\xi)\right|^p \le \\
     K_p \mathbb{E} \left( (\int_0^T \int_E \|k(s,\xi,\omega)\|^2 N(ds,d\xi))^\frac{p}{2}\right)
    \end{multline*}
	Subtracting a compensator from the right hand side we get
    \begin{multline*}
        \le K_p 2^{\frac{p}{2}} \Bigg[ \mathbb{E} \left( \left(\int_0^T \int_E \left|k(s,\xi,\omega)\right|^2 \tilde{N}(ds,d\xi) \right)^\frac{p}{2}\right) \\
        + \mathbb{E} \left( \left(\int_0^T \int_E \left|k(s,\xi,\omega)\right|^2 \nu(d\xi)ds \right)^\frac{p}{2}\right) \Bigg]
    \end{multline*}

    Note that $2^{n-1}\le \frac{p}{2} <2^n$ hence we can apply the induction hypothesis to the first term on the right hand side and find
    \begin{multline}\label{equation: proof of L^p_bound_1}
        \le K_p 2^{\frac{p}{2}} \Bigg[ \mathfrak{C}_{\frac{p}{2}} \mathbb{E} \left( \left( \int_0^T \int_E \left|k(s,\xi,\omega)\right|^2 \nu(d\xi) ds \right)^{\frac{p}{2}}\right)\\
        + \mathbb{E} \left( \int_0^T \int_E \left|k(s,\xi,\omega)\right|^p \nu(d\xi) ds \right) \\
        + \mathbb{E} \left( \left(\int_0^T \int_E \left|k(s,\xi,\omega)\right|^2 \nu(d\xi)ds \right)^\frac{p}{2}\right)\Bigg]
    \end{multline}
    By the interpolation inequality for a suitable $\theta$ such that $\theta+\frac{1-\theta}{p}=\frac{1}{2}$ we have
    \begin{multline*}
    \left( \int_0^T \int_E \left|k(s,\xi,\omega)\right|^2 \nu(d\xi) ds \right)^{\frac{1}{2}} \le \\
    \left( \int_0^T \int_E \left|k(s,\xi,\omega)\right| \nu(d\xi) ds \right)^{\theta} \left( \int_0^T \int_E \left|k(s,\xi,\omega)\right|^p \nu(d\xi) ds \right)^{\frac{1-\theta}{p}}
    \end{multline*}
    raising to power $p$ we have
    \begin{multline*}
    \left( \int_0^T \int_E \left|k(s,\xi,\omega)\right|^2 \nu(d\xi) ds \right)^{\frac{p}{2}} \le \\
    \left( \int_0^T \int_E \left|k(s,\xi,\omega)\right| \nu(d\xi) ds \right)^{\theta p} \left( \int_0^T \int_E \left|k(s,\xi,\omega)\right|^p \nu(d\xi) ds \right)^{1-\theta}
    \end{multline*}
    And by the arithmetic-geometric mean inequality
    \[ \le \theta \left( \int_0^T \int_E \left|k(s,\xi,\omega)\right| \nu(d\xi) ds \right)^{{p}} + (1-\theta)\left( \int_0^T \int_E \left|k(s,\xi,\omega)\right|^p \nu(d\xi) ds \right) \]
    taking expectations and substituting in~\eqref{equation: proof of L^p_bound_1} the statement is proved.
\end{proof}

\section{Semilinear Stochastic Evolution Equations with L\'evy Noise and Monotone Nonlinear Drift}\label{sec: Semilinear SEE}

Let $(\Omega,\mathcal{F},\mathcal{F}_t,\mathbb{P})$ be a filtered probability space. Let $(E,\mathcal{E})$ be a measurable space and $N(dt,d\xi)$ a Poisson random measure on $\mathbb{R}^+ \times E$ with intensity measure $dt \nu(d\xi)$. Our goal is to study the following equation in $H$,
\begin{equation}\label{main_equation}
    dX_t=AX_t dt+f(t,X_t) dt + \int_E k(t,\xi,X_{t-}) \tilde{N}(dt,d\xi),
\end{equation}
where $\tilde{N}(dt,d\xi)=N(dt,d\xi)-dt\nu(d\xi)$ is the compensated Poisson random measure corresponding to $N$.

The existence and uniqueness of the mild solutions of these equations in $L^2$ has been proved in~\cite{Salavati_Zangeneh:existence and uniqueness}. In this section we prove the existence and uniqueness of the solution in $L^p$ for $p\ge 2$ in Theorem~\ref{theorem: existence in L^p}. We also prove the continuous dependence on initial condition and provide sufficient conditions under which the solutions are exponentially asymptotically stable.

We will use the notion of stochastic integration with respect to compensated Poisson random measure. For the definition and properties see~\cite{Peszat-Zabczyk} and~\cite{Albeverio-Mandrekar-Rudiger-2009}.

\begin{definition}
    $f:H\to H$ is called \emph{demicontinuous} if whenever $x_n \to x$, strongly in $H$ then $f(x_n)\rightharpoonup f(x)$ weakly in $H$.
\end{definition}

We assume the following,

\begin{hypothesis}\label{main_hypothesis}
    \begin{description}

        \item[(a)] $f(t,x,\omega):\mathbb{R}^+\times H\times \Omega \to H$ is measurable, $\mathcal{F}_t$-adapted, demicontinuous with respect to $x$ and there exists a constant $M$ such that
            \[ \langle f(t,x,\omega)-f(t,y,\omega),x-y \rangle \le M \|x-y\|^2,\]

        \item[(b)] $k(t,\xi,x,\omega):\mathbb{R}^+\times E\times H\times \Omega \to H$ is predictable and there exists a constant $C$ such that
            \[ \int_{E}\|k(t,\xi,x)-k(t,\xi,y)\|^2 \nu(d\xi) \le C \|x-y \|^2,\]

        \item[(c)] There exists a constant $D$ such that
            \[ \| f(t,x,\omega)\|^2 + \int_{E}\|k(t,\xi,x)\|^2 \nu(d\xi) \le D(1+\|x\|^2),\]
		
		\item[(d)] There exists a constant $F$ such that
            \[ \int_{E}\|k(t,\xi,x)-k(t,\xi,y)\|^p \nu(d\xi) \le F \|x-y \|^p,\]
            \[\int_{E}\|k(t,\xi,x)\|^p \nu(d\xi) \le F(1+\|x\|^p),\]
            
        \item[(e)] $X_0(\omega)$ is $\mathcal{F}_0$ measurable and $\mathbb{E}\|X_0\|^p < \infty$.
    \end{description}

\end{hypothesis}

\begin{definition}
    By a \emph{mild solution} of equation~\eqref{main_equation} with initial condition $X_0$ we mean an adapted c\`adl\`ag process $X_t$ that satisfies
    \begin{multline}\label{mild_solution}
        X_t=S_t X_0+\int_0^t S_{t-s}f(s,X_s) ds + \int_0^t{\int_E {S_{t-s}k(s,\xi,X_{s-})} \tilde{N}(ds,d\xi).}
    \end{multline}
\end{definition}

We are naw ready to state and prove the main theorem of this section,

\begin{theorem}[Existence of the Solution in $L^p$]\label{theorem: existence in L^p}
Let $p\ge 2$. Then under assumptions of Hypothesis~\ref{main_hypothesis}, equation~\eqref{main_equation} has a unique square integrable c\`adl\`ag mild solution $X(t)$ such that $\mathbb{E} \sup_{0\le s\le t} \|X(s)\|^p <\infty$.
\end{theorem}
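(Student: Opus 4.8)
The plan is to obtain existence and uniqueness of the square-integrable mild solution from the $L^2$ theory already established in~\cite{Salavati_Zangeneh:existence and uniqueness} (whose hypotheses are exactly (a)--(c) together with square-integrability of $X_0$, which is implied by (e) since $p\ge 2$), and then to \emph{upgrade the integrability} by proving the a priori bound $\mathbb{E}\sup_{0\le s\le t}\|X(s)\|^p<\infty$ directly from the It\^o type inequality of Theorem~\ref{theorem: Ito type inequality for pth power}. Because any $L^p$ solution is in particular an $L^2$ solution, $L^2$-uniqueness is inherited, so the entire content of the theorem reduces to this moment estimate. (For genuinely general semi-monotone $f$ one could instead first solve the equation for the globally Lipschitz Yosida approximants of $MI-f$ by Picard iteration in the space of c\`adl\`ag adapted processes normed by $(\mathbb{E}\sup\|\cdot\|^p)^{1/p}$, controlling the convolution via Theorems~\ref{theorem:Burkholder type inequality} and~\ref{theorem: L^p_bound_stochastic_integral}, and then pass to the limit; but I expect reusing the $L^2$ solution to be cleaner.)

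For the a priori estimate I would apply Theorem~\ref{theorem: Ito type inequality for pth power} to $X(t)=S_tX_0+\int_0^tS_{t-s}\,dZ(s)$ with finite-variation part $V(t)=\int_0^t f(s,X_s)\,ds$ and pure-jump martingale part $M(t)=\int_0^t\int_E k(s,\xi,X_{s-})\,\tilde N(ds,d\xi)$. Since $M$ has no continuous part, $[M]^c\equiv 0$ and the $\tfrac12 p(p-1)$ term drops, leaving a drift term, a martingale term, and a jump-remainder sum. The drift term is handled by semi-monotonicity and linear growth: writing $\langle x,f(t,x)\rangle=\langle x,f(t,x)-f(t,0)\rangle+\langle x,f(t,0)\rangle$ and using (a) and (c) gives $\|x\|^{p-2}\langle x,f(t,x)\rangle\le C(1+\|x\|^p)$. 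The jump-remainder sum is estimated by the elementary bound $\bigl|\,\|a+h\|^p-\|a\|^p-p\|a\|^{p-2}\langle a,h\rangle\,\bigr|\le C_p\bigl(\|a\|^{p-2}\|h\|^2+\|h\|^p\bigr)$; after compensating the sum and invoking the growth conditions (c) and (d), this is controlled by $C\int_0^t(1+\|X(s)\|^p)\,ds$ in expectation.

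The delicate term is the martingale $p\int_0^t e^{p\alpha(t-s)}\|X(s^-)\|^{p-2}\langle X(s^-),dM(s)\rangle$ once the supremum over $t$ is taken. Here I would use the Burkholder--Davis--Gundy / Bichteler--Jacod machinery of Theorems~\ref{theorem:Burkholder type inequality} and~\ref{theorem: L^p_bound_stochastic_integral}: its quadratic variation involves $\|X(s^-)\|^{2(p-1)}\|k\|^2$, and after the interpolation and arithmetic--geometric mean step (exactly as in the proof of Theorem~\ref{theorem: L^p_bound_stochastic_integral}) together with Young's inequality one can absorb a small multiple $\varepsilon\,\mathbb{E}\sup_{0\le s\le t}\|X(s)\|^p$ into the left-hand side. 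The real obstacle is that none of these manipulations is legitimate until $\mathbb{E}\sup\|X\|^p$ is already known to be finite. I would therefore localize with $\tau_N=\inf\{t:\|X(t)\|>N\}$, run the whole estimate for the stopped process so that every term is finite and the constants are independent of $N$, reach $\mathbb{E}\sup_{0\le s\le t\wedge\tau_N}\|X(s)\|^p\le C_1(1+\mathbb{E}\|X_0\|^p)+C_2\int_0^t\mathbb{E}\sup_{0\le r\le s\wedge\tau_N}\|X(r)\|^p\,ds$, apply Gronwall's lemma to obtain a bound uniform in $N$, and finally let $N\to\infty$ by Fatou's lemma. This yields $\mathbb{E}\sup_{0\le s\le t}\|X(s)\|^p\le C(1+\mathbb{E}\|X_0\|^p)e^{Ct}<\infty$, completing the proof.
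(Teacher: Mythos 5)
Your strategy is sound and the theorem can indeed be proved this way, but it is a genuinely different route from the paper's. After the common first step (importing $L^2$ existence and uniqueness from~\cite{Salavati_Zangeneh:existence and uniqueness}), the paper never estimates the solution $X$ itself. Instead it re-runs the Picard-type scheme from the $L^2$ theory: each iterate $X^n$ is produced by Theorem~\ref{theorem:measurability} from the stochastic convolution $V^n$ of $X^{n-1}$; Lemma~\ref{lemma: finite_pth_moment_iteration} proves $\mathbb{E}\sup_{t\le T}\|X^n_t\|^p<\infty$ by induction on $n$ (using Theorems~\ref{theorem:Burkholder type inequality} and~\ref{theorem: L^p_bound_stochastic_integral}); Lemma~\ref{lemma: convergence_pth_moment} proves the factorial decay $\mathbb{E}\|X^{n+1}_t-X^n_t\|^p\le C_0C_1^n t^n/n!$ by applying Theorem~\ref{theorem: Ito type inequality for pth power} to the \emph{differences} $X^{n+1}-X^n$; the scheme is then Cauchy in $L^p(\Omega;L^\infty([0,T];H))$ and its limit is identified with the $L^2$ solution. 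The point of the induction is that the moment hypotheses of Theorem~\ref{theorem: Ito type inequality for pth power} (namely $\mathbb{E}[M](T)^{p/2}<\infty$ and $\mathbb{E}|V|(T)^p<\infty$) are available at each step from the previous one, so the circularity you identified never arises and no localization is needed; the paper's proof also yields a rate of convergence for the approximation scheme. Your route is conceptually shorter (a pure integrability upgrade of the known solution) and produces an explicit a priori bound $\mathbb{E}\sup_{s\le t}\|X(s)\|^p\le C(1+\mathbb{E}\|X_0\|^p)e^{Ct}$, which the paper's argument does not state.

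One step of your argument is not legitimate as written, though it has a standard repair. You cannot ``run the whole estimate for the stopped process'' $X(\cdot\wedge\tau_N)$: a stopped stochastic convolution is no longer of the form $S_tX_0+\int_0^tS_{t-s}\,dZ(s)$ (convolutions are not semimartingales and do not commute with stopping), so Theorem~\ref{theorem: Ito type inequality for pth power} simply does not apply to it. The correct localization truncates the integrands: set $X_N(t)=S_tX_0+\int_0^tS_{t-s}f(s,X_s)\mathbf{1}_{\{s\le\tau_N\}}\,ds+\int_0^t\int_E S_{t-s}k(s,\xi,X_{s-})\mathbf{1}_{\{s\le\tau_N\}}\,\tilde N(ds,d\xi)$, which coincides with $X$ on $[0,\tau_N]$. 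Since $\|X_{s-}\|\le N$ for $s\le\tau_N$, Hypothesis (c) gives $\mathbb{E}|V_N|(T)^p<\infty$, and Hypothesis (d) together with Theorem~\ref{theorem: L^p_bound_stochastic_integral} (applied to the real-valued integrand $\|k\|^2\mathbf{1}_{\{s\le\tau_N\}}$ with exponent $p/2$) gives $\mathbb{E}[M_N](T)^{p/2}<\infty$; thus Theorem~\ref{theorem: Ito type inequality for pth power} is applicable to $X_N$, every term in your estimate is finite, and the constants are independent of $N$ because the truncation enters only through indicators bounded by $1$. With this modification your Gronwall--Fatou scheme goes through; note also that the jump-remainder terms are nonnegative by convexity of $\|\cdot\|^p$, so their running supremum in $t$ is attained at $t=T$ and can be handled by the compensation formula alone, without a further martingale inequality.
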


Before proceeding to proof of Theorem~\ref{theorem: existence in L^p}, we state the following theorem from~\cite{Zangeneh-measurability} without proof.

\begin{theorem}[Zangeneh,~\cite{Zangeneh-measurability}]\label{theorem:measurability}
Assume $f$ satisfies Hypothesis~\ref{main_hypothesis}-(a) and there exists a constant $D$ such that $\|f(t,x,\omega)\|^2 \le D (1+\|x\|^2)$ and assume $V(t,\omega)$ is an adapted process with c\`adl\`ag trajectories and $X_0(\omega)$ is $\mathcal{F}_0$ measurable. Then the equation,
        \[ X_t=S_t X_0 + \int_0^t S_{t-s}f(s,X_s,\omega) ds + V(t,\omega)\]
    has a unique measurable adapted c\`adl\`ag solution $X_t(\omega)$. Furtheremore
    \[	\|X(t)\|\le \|X_0\|+\|V(t)\|+\int_0^t e^{(\alpha+M)(t-s)} \|f(s,S_s X_0+V(s))\| ds, \]
\end{theorem}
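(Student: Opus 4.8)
The plan is to solve the equation pathwise: for each fixed $\omega$ the equation
\[ X_t=S_t X_0 + \int_0^t S_{t-s}f(s,X_s,\omega)\, ds + V(t,\omega) \]
is a \emph{deterministic} semilinear convolution equation with a semimonotone, demicontinuous, linearly growing nonlinearity, so there is no probability left to exploit except joint measurability at the end. It is convenient to set $U_t:=S_t X_0+V_t$ and $Z_t:=X_t-U_t$; then $X$ solves the equation if and only if $Z_t=\int_0^t S_{t-s}f(s,Z_s+U_s)\,ds$, i.e. $Z$ is the mild solution of $Z'=AZ+f(t,Z+U_t)$ with $Z_0=0$. Since $x\mapsto f(t,x+U_t)$ inherits semimonotonicity, demicontinuity and linear growth, the problem is reduced to a standard deterministic monotone evolution equation, and I would treat uniqueness together with the a priori bound first, then existence.

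I would obtain both uniqueness and the a priori estimate from the It\^o type inequality Theorem~\ref{theorem: Ito type inequality for pth power} specialized to the deterministic case: taking the martingale part identically zero, so that the driving semimartingale is the absolutely continuous finite-variation process $\hat V_s=\int_0^s f(u,X_u)\,du$, and applying the inequality pathwise with $p=2$. For uniqueness, if $X,X'$ are two solutions with the same data then $W=X-X'$ satisfies $W_t=\int_0^t S_{t-s}(f(s,X_s)-f(s,X_s'))\,ds$, and Theorem~\ref{theorem: Ito type inequality for pth power} gives
\[ \|W_t\|^2 \le 2\int_0^t e^{2\alpha(t-s)}\langle W_s,\, f(s,X_s)-f(s,X_s')\rangle\, ds \le 2M\int_0^t e^{2\alpha(t-s)}\|W_s\|^2\, ds, \]
where the last step uses Hypothesis~\ref{main_hypothesis}-(a); Gronwall forces $W\equiv 0$. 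For the bound I would apply the same inequality to $Z$ and split $f(s,Z_s+U_s)=[f(s,Z_s+U_s)-f(s,U_s)]+f(s,U_s)$, controlling the first bracket by $M\|Z_s\|^2$ via semimonotonicity and the second by $\|Z_s\|\,\|f(s,U_s)\|$ via Cauchy--Schwarz. This yields a quadratic Gronwall inequality whose resolution gives $\|Z_t\|\le \int_0^t e^{(\alpha+M)(t-s)}\|f(s,U_s)\|\,ds$; together with $\|X_t\|\le \|S_t X_0\|+\|V_t\|+\|Z_t\|$ this is exactly the stated estimate, reading the first term as $\|S_tX_0\|$, bounded by $\|X_0\|$ under the contraction normalization.

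For existence $f$ is only demicontinuous, not Lipschitz, so I would regularize. Since $g(t,\cdot):=MI-f(t,\cdot)$ is monotone, demicontinuous and everywhere defined, it is maximal monotone; let $g_\lambda$ be its Yosida approximation and set $f_\lambda:=MI-g_\lambda$, which is globally Lipschitz, still semimonotone with the same constant $M$, and of linear growth uniformly in $\lambda$. The Lipschitz equation $X^\lambda_t=U_t+\int_0^t S_{t-s}f_\lambda(s,X^\lambda_s)\,ds$ then has a unique mild solution by the Banach fixed point theorem, and the energy estimate above provides bounds on $\sup_t\|X^\lambda_t\|$ and on $\|f_\lambda(\cdot,X^\lambda_\cdot)\|$ uniform in $\lambda$. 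The crucial step is to show $\{X^\lambda\}$ is Cauchy in $C([0,T];H)$ as $\lambda\to 0$: applying the energy inequality to $X^\lambda-X^\mu$ and using the standard resolvent estimate $\langle g_\lambda x-g_\mu y,\,x-y\rangle\ge -(\lambda+\mu)\langle g_\lambda x,\,g_\mu y\rangle$ together with the uniform bound on $g_\lambda$, one obtains $\sup_t\|X^\lambda_t-X^\mu_t\|^2\le C(\lambda+\mu)$, hence a strong limit $X$. Since $\|J_\lambda^s X^\lambda_s-X^\lambda_s\|=\lambda\|g_\lambda(X^\lambda_s)\|\to 0$, demicontinuity of $f$ forces $f_\lambda(s,X^\lambda_s)\rightharpoonup f(s,X_s)$, which identifies the limit of the convolution term and shows that $X$ is a mild solution.

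Finally, the Yosida resolvents and the Picard iterates defining $X^\lambda$ are jointly measurable in $(t,\omega)$ and adapted, and these properties survive the uniform limit, so $X$ is measurable and adapted; writing $X=U+Z$ with $U$ c\`adl\`ag (as $V$ is c\`adl\`ag and $t\mapsto S_tX_0$ is continuous) and $Z$ continuous gives c\`adl\`ag paths. I expect the main obstacle to be the existence half, specifically the passage to the limit in the approximation: because $f$ is only demicontinuous, strong convergence of $f_\lambda(X^\lambda)$ is unavailable, and one must combine the uniform a priori bounds, the monotone-operator Yosida estimates yielding the Cauchy property, and a Minty/demicontinuity argument to identify the weak limit. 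By contrast, uniqueness and the a priori inequality are essentially immediate once Theorem~\ref{theorem: Ito type inequality for pth power} is in hand.
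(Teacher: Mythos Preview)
The paper does not contain a proof of this theorem: immediately before the statement the authors write ``we state the following theorem from~\cite{Zangeneh-measurability} without proof,'' so there is no in-paper argument to compare your proposal against. Your outline is the standard deterministic monotone-operator treatment of the integral equation (substitution $Z=X-S_\cdot X_0-V$, energy inequality via the $p=2$ case of Theorem~\ref{theorem: Ito type inequality for pth power} for uniqueness and the a~priori bound, Yosida regularization of the monotone part $g=MI-f$ for existence, and passage to the limit by a Minty/demicontinuity argument), which is precisely the circle of ideas in Zangeneh's original work cited here; in that sense your approach is the expected one.

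Two minor remarks. First, your derivation of the bound actually yields $\|X_t\|\le \|S_tX_0\|+\|V_t\|+\int_0^t e^{(\alpha+M)(t-s)}\|f(s,S_sX_0+V_s)\|\,ds$, hence $e^{\alpha t}\|X_0\|$ rather than $\|X_0\|$ in the first term; the form stated in the paper is literally correct only after the reduction to $\alpha=0$ carried out in Lemma~\ref{lemma: alpha=0}, which you noted. Second, in the Cauchy step for $\{X^\lambda\}$ you should make explicit that the uniform bound on $\|g_\lambda(s,X^\lambda_s)\|$ (needed both for $C(\lambda+\mu)$ and for $J_\lambda X^\lambda_s\to X_s$) follows from $\|g_\lambda(x)\|\le\|g(x)\|$ together with the uniform-in-$\lambda$ a~priori estimate on $\sup_t\|X^\lambda_t\|$; once that is recorded, the weak identification $f_\lambda(s,X^\lambda_s)=M(X^\lambda_s-J_\lambda X^\lambda_s)+f(s,J_\lambda X^\lambda_s)\rightharpoonup f(s,X_s)$ and the passage to the limit in the convolution integral go through as you describe.
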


\begin{lemma}\label{lemma: alpha=0}
        It suffices to prove theorem~\ref{theorem: existence in L^p} for the case that $\alpha=0$.
    \end{lemma}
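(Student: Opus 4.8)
The plan is to reduce the general exponential-growth case to the contraction case by the standard rescaling $\tilde{S}_t := e^{-\alpha t} S_t$. Since $\|\tilde{S}_t\| = e^{-\alpha t}\|S_t\| \le 1$, the operators $\tilde{S}_t$ form a $C_0$ contraction semigroup, whose generator is $\tilde{A} = A - \alpha I$. I would then transform the unknown by setting $Y_t := e^{-\alpha t} X_t$ and introduce rescaled coefficients
\[
    \tilde{f}(s, y, \omega) := e^{-\alpha s} f(s, e^{\alpha s} y, \omega), \qquad \tilde{k}(s, \xi, y, \omega) := e^{-\alpha s} k(s, \xi, e^{\alpha s} y, \omega).
\]

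First I would verify that $X_t$ is a mild solution of~\eqref{main_equation} with semigroup $S_t$ and coefficients $f, k$ if and only if $Y_t$ is a mild solution of the analogous equation with the contraction semigroup $\tilde{S}_t$ and coefficients $\tilde{f}, \tilde{k}$. This is a direct computation: multiplying the identity~\eqref{mild_solution} by $e^{-\alpha t}$ and using $e^{-\alpha t} S_{t-s} = e^{-\alpha s} \tilde{S}_{t-s}$ together with $X_{s-} = e^{\alpha s} Y_{s-}$ (the prefactor $e^{-\alpha s}$ being continuous, so left limits are unaffected) turns each term into the corresponding term for $Y$. The initial conditions agree, $Y_0 = X_0$.

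Next I would check that $\tilde{f}$ and $\tilde{k}$ still satisfy Hypothesis~\ref{main_hypothesis}. The semimonotonicity and Lipschitz-type conditions (a), (b), and the first bound in (d) are preserved with exactly the same constants $M$, $C$, $F$, because the factor $e^{\alpha s}$ arising from rescaling the argument cancels precisely against the prefactors $e^{-\alpha s}$; for instance
\[
    \langle \tilde{f}(s,y) - \tilde{f}(s,y'), y - y'\rangle = e^{-2\alpha s}\langle f(s, e^{\alpha s} y) - f(s, e^{\alpha s} y'), e^{\alpha s}(y - y')\rangle \le M\|y - y'\|^2.
\]
The linear-growth bounds in (c) and the second bound in (d) acquire modified constants depending on $e^{|\alpha| T}$, but since we work on a fixed finite interval $[0,T]$ this is harmless.

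Finally, because $e^{-\alpha t}$ and $e^{\alpha t}$ are bounded on $[0,T]$, the processes $X$ and $Y$ share all the regularity asserted in the conclusion: one is c\`adl\`ag if and only if the other is, and $\mathbb{E}\sup_{0 \le s \le t}\|X(s)\|^p < \infty$ if and only if $\mathbb{E}\sup_{0 \le s \le t}\|Y(s)\|^p < \infty$. Hence existence and uniqueness of $Y$ in the contraction case yields the same for $X$ in the general case. The only point demanding care is the bookkeeping of constants in the verification of the hypotheses; there is no genuine analytic obstacle, the whole argument being a change of variables.
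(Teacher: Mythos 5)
Your proposal is correct and takes exactly the same approach as the paper: the rescaling $\tilde{S}_t=e^{-\alpha t}S_t$, $\tilde{f}(t,x)=e^{-\alpha t}f(t,e^{\alpha t}x)$, $\tilde{k}(t,\xi,x)=e^{-\alpha t}k(t,\xi,e^{\alpha t}x)$ with $\tilde{X}_t=e^{-\alpha t}X_t$. The only difference is that you spell out the verification that the hypotheses and the $L^p$ regularity are preserved, which the paper leaves as ``it is easy to see.''
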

    \begin{proof} Define
        \begin{gather*}
           \tilde{S}_t= e^{-\alpha t} S_t ,\qquad \tilde{f}(t,x,\omega)=e^{-\alpha t}f(t,e^{\alpha t}x,\omega) , \\
           \tilde{k}(t,\xi,x,\omega)=e^{-\alpha t}k(t,\xi,e^{\alpha t}x,\omega).
        \end{gather*}
        Note that $\tilde{S}_t$ is a contraction semigroup. It is easy to see that $X_t$ is a mild solution of equation~\eqref{main_equation} if and only if $\tilde{X}_t=e^{-\alpha t} X_t$ is a mild solution of equation with coefficients $\tilde{S},\tilde{f},\tilde{k}$.
    \end{proof}

\begin{proof}[Proof of Theorem~\ref{theorem: existence in L^p}.]
	Existence and uniqueness of the mild solution in $L^2$ has been proved in~\cite{Salavati_Zangeneh:existence and uniqueness}, Theorem 4. Uniqueness in $L^2$ implies the uniqueness in $L^p$ for $p\ge 2$. It remains to prove the existence in $L^p$.

    \emph{Existence.}
	It suffices to prove the existence of a solution on a finite interval $[0,T]$. Then one can show easily that these solutions are consistent and give a global solution. We define adapted c\`adl\`ag processes $X^{n}_t$ recursively as follows. Let $X^0_t=S_t X_0$. Assume $X^{n-1}_t$ is defined. Theorem~\ref{theorem:measurability} implies that there exists an adapted c\`adl\`ag solution $X^n_t$ of
    \begin{equation}\label{equation: proof of existence_iteration}
        X^n_t=S_t X_0 + \int_0^t S_{t-s}f(s,X^n_s) ds + V^n_t,
    \end{equation}
    where
        \[ V^{n}_t= \int_0^t{\int_E {S_{t-s}k(s,\xi,X^{n-1}_{s-})} \tilde{N}(ds,d\xi)}. \]
    
	It is proved in~\cite{Salavati_Zangeneh:existence and uniqueness} that $\{X^n\}$ converge to some adapted c\`adl\`ag process $X_t$ in the sense that 
	\[ \mathbb{E} \sup\limits_{0\le t\le T} \|X^n_t-X_t\|^2 \to 0, \]
	and that $X_t$ is the mild solution of equation~\eqref{main_equation}.
	
    We wish to show that $\{X^n\}$ converge to $X_t$ in $L^p$ with the supremum norm. This is done by the following two lemmas.
    
    \begin{lemma}\label{lemma: finite_pth_moment_iteration}
    	\[ \mathbb{E}\sup\limits_{0\le t\le T} \|X^n_t\|^p<\infty. \]
    \end{lemma}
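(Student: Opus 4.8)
The plan is to argue by induction on $n$, after first reducing to the contraction case $\alpha=0$ via Lemma~\ref{lemma: alpha=0}, so that $S_t$ is a contraction semigroup and the Burkholder type inequality of Theorem~\ref{theorem:Burkholder type inequality} becomes available. The base case $n=0$ is immediate: since $X^0_t=S_tX_0$ and $\|S_t\|\le 1$, we have $\sup_{0\le t\le T}\|X^0_t\|\le\|X_0\|$, so $\mathbb{E}\sup_{0\le t\le T}\|X^0_t\|^p\le\mathbb{E}\|X_0\|^p<\infty$ by Hypothesis~\ref{main_hypothesis}-(e). For the inductive step I assume $\mathbb{E}\sup_{0\le t\le T}\|X^{n-1}_t\|^p<\infty$ and analyze $X^n_t$ through equation~\eqref{equation: proof of existence_iteration}, treating the stochastic convolution term $V^n_t$ and the drift term separately.

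The heart of the argument is the estimate for $V^n_t=\int_0^t\int_E S_{t-s}k(s,\xi,X^{n-1}_{s-})\tilde{N}(ds,d\xi)$. Setting $M_t=\int_0^t\int_E k(s,\xi,X^{n-1}_{s-})\tilde{N}(ds,d\xi)$, so that $V^n_t=\int_0^t S_{t-s}\,dM_s$, Theorem~\ref{theorem:Burkholder type inequality} gives $\mathbb{E}\sup_{0\le t\le T}\|V^n_t\|^p\le K_p\,\mathbb{E}[M]_T^{p/2}$, where $[M]_T=\int_0^T\int_E\|k(s,\xi,X^{n-1}_{s-})\|^2 N(ds,d\xi)$. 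To control this I split $N=\tilde{N}+\nu(d\xi)\,ds$ and apply Theorem~\ref{theorem: L^p_bound_stochastic_integral} with exponent $p/2\ge 1$ to the nonnegative scalar integrand $\|k\|^2$, noting that $(\|k\|^2)^{p/2}=\|k\|^p$; this bounds $\mathbb{E}[M]_T^{p/2}$ by a constant times $\mathbb{E}\big(\int_0^T\int_E\|k\|^2\nu(d\xi)\,ds\big)^{p/2}+\mathbb{E}\int_0^T\int_E\|k\|^p\nu(d\xi)\,ds$. The growth conditions finish this term: Hypothesis~\ref{main_hypothesis}-(c) gives $\int_E\|k(s,\xi,x)\|^2\nu(d\xi)\le D(1+\|x\|^2)$ and Hypothesis~\ref{main_hypothesis}-(d) gives $\int_E\|k(s,\xi,x)\|^p\nu(d\xi)\le F(1+\|x\|^p)$, so integrating in $s$ over $[0,T]$ dominates both terms by a constant (depending on $T,p,D,F$) times $1+\mathbb{E}\sup_{0\le s\le T}\|X^{n-1}_s\|^p$, which is finite by the induction hypothesis. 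Hence $\mathbb{E}\sup_{0\le t\le T}\|V^n_t\|^p<\infty$.

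It remains to pass from $V^n$ to $X^n$. Applying the pathwise estimate of Theorem~\ref{theorem:measurability} to equation~\eqref{equation: proof of existence_iteration} (with $\alpha=0$) yields $\|X^n(t)\|\le\|X_0\|+\|V^n(t)\|+\int_0^t e^{M(t-s)}\|f(s,S_sX_0+V^n(s))\|\,ds$. The linear growth of $f$ from Hypothesis~\ref{main_hypothesis}-(c), in the form $\|f(s,x)\|\le\sqrt{D}(1+\|x\|)$, together with $\|S_sX_0\|\le\|X_0\|$, bounds the integral by a constant multiple of $1+\|X_0\|+\sup_{0\le s\le T}\|V^n(s)\|$, so that $\sup_{0\le t\le T}\|X^n(t)\|\le C\big(1+\|X_0\|+\sup_{0\le s\le T}\|V^n(s)\|\big)$. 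Raising to the $p$th power, taking expectations, and invoking $\mathbb{E}\|X_0\|^p<\infty$ together with the bound on $V^n$ just established shows $\mathbb{E}\sup_{0\le t\le T}\|X^n_t\|^p<\infty$, completing the induction.

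I expect the main obstacle to be the convolution term. Because stochastic convolution integrals are not martingales, Theorem~\ref{theorem: L^p_bound_stochastic_integral} cannot be applied to $V^n$ directly, and the two-step reduction---first to the quadratic variation $[M]_T$ through the Burkholder type inequality, and only then to the compensator integrals via Theorem~\ref{theorem: L^p_bound_stochastic_integral} with the halved exponent---is the delicate bookkeeping step, particularly the manoeuvre of subtracting the compensator to return from an $N$-integral to $\tilde{N}$- and $\nu$-integrals. The drift term, by contrast, is entirely deterministic given the trajectory and is dispatched pathwise by Theorem~\ref{theorem:measurability}.
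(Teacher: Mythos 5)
Your proposal is correct and follows essentially the same route as the paper: induction on $n$, the pathwise estimate of Theorem~\ref{theorem:measurability} to reduce everything to bounding $\mathbb{E}\sup_{0\le t\le T}\|V^n_t\|^p$, then the Burkholder type inequality (Theorem~\ref{theorem:Burkholder type inequality}) followed by splitting $N=\tilde{N}+\nu\,ds$ and applying Theorem~\ref{theorem: L^p_bound_stochastic_integral} with exponent $p/2$ to the scalar integrand $\|k\|^2$, closing with Hypotheses (c), (d) and the induction hypothesis. The only cosmetic difference is in the drift term, where you use the pointwise linear-growth bound $\|f(s,x)\|\le\sqrt{D}(1+\|x\|)$ while the paper applies Cauchy--Schwarz and keeps Hypothesis (c) in its $L^2$ form; the two are interchangeable.
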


	\begin{proof}
		We prove by induction on $n$. By Theorem~\ref{theorem:measurability} we have the following estimate,
	    \[	\|X^n_t\|\le \|X_0\|+\|V^n_t\|+\int_0^t e^{M(t-s)} \|f(s,S_s X_0+V^n_s)\| ds. \]
	  	Hence,
	   \[ \|X^n_t\|^p \le 3^p \|X_0\|^p + 3^p \|V^n_t\|^p + 3^p \left( \int_0^t e^{M(t-s)} \|f(s , S_s X_0+V^n_s)\| ds \right)^p \]
	  	Taking supremum and using Cauchy-Schwartz inequality we find	
	  \begin{multline*}
	  		\sup\limits_{0\le t\le T} \|X^n_t\|^p \le 3^p \|X_0\|^p + 3^p \sup\limits_{0\le t\le T} \|V^n_t\|^p\\
	  		+ 3^p e^{|M|T} T^{\frac{p}{2}} \left( \underbrace{\int_0^T \|f(s , S_s X_0+V^n_s)\|^2 ds}_G \right)^\frac{p}{2}
	  \end{multline*}
	  	Using Hypothesis~\ref{main_hypothesis}-(c) and Holder's inequality we find
	  	\begin{multline*}
	   		G \le D^\frac{p}{2} \left( \int_0^T (1+\|S_s X_0 +V^n_s\|^2) ds \right)^\frac{p}{2} \\
	   			\le D^\frac{p}{2} \left( T + 2T \|X_0\|^2 + 2 \int_0^T \|V^n_s\|^2 ds \right) ^\frac{p}{2} \\
	   			\le D^\frac{p}{2} \left( 3^\frac{p}{2} T^\frac{p}{2} + 2^\frac{p}{2}3^\frac{p}{2} T^\frac{p}{2} \|X_0\|^p + 2^\frac{p}{2}3^\frac{p}{2} T^\frac{p}{2} \sup\limits_{0\le s\le T} \|V^n_s\|^2 \right)
	   	\end{multline*}
	   	
	   	Hence, to prove the Lemma it suffices to prove that 
	    \[ \mathbb{E}\sup\limits_{0\le t\le T} \|V^n_t\|^p<\infty. \]

	    Applying Burkholder type inequality (Theorem~\ref{theorem:Burkholder type inequality}), we find
	    \[ \mathbb{E}\sup\limits_{0\le t\le T} \|V_t\|^p \le K_p \mathbb{E} ([\tilde{M}]_T^{\frac{p}{2}}), \]
	    where $\tilde{M}_t=\int_0^t \int_E k(s,\xi,X^{n-1}_{s-}) \tilde{N}(ds,du)$. Hence
	    \begin{multline*}
	    		\mathbb{E}\sup\limits_{0\le t\le T} \|V_t\|^p \le K_p \mathbb{E} \left( (\int_0^T \int_E \|k(s,\xi,X^{n-1}_{s})\|^2 N(ds,du))^\frac{p}{2}\right) \\
	    		\le 2^\frac{p}{2} K_p \Bigg( \mathbb{E} \left( (\int_0^T \int_E \|k(s,\xi,X^{n-1}_{s})\|^2 \nu(du) ds)^\frac{p}{2} \right) \\
	    	+ \mathbb{E} \left( (\int_0^T \int_E \|k(s,\xi,X^{n-1}_{s})\|^2 \tilde{N}(ds,du))^\frac{p}{2}\right) \Bigg)
	    \end{multline*}
	    By Hypothesis~\ref{main_hypothesis} (c) we have,
	    \begin{multline*}
	    		\le 2^\frac{p}{2} K_p D^\frac{p}{2} \left(\mathbb{E}( \int_0^T (1+\|X^{n-1}_s\|^2) ds)^\frac{p}{2})\right) \\
	    		+ 2^\frac{p}{2} K_p \mathbb{E} \left( (\int_0^T \int_E \|k(s,\xi,X^{n-1}_{s})\|^2 \tilde{N}(ds,du))^\frac{p}{2}\right)
	    \end{multline*}
	    Since $\frac{p}{2}\ge 1$, we can apply Theorem~\ref{theorem: L^p_bound_stochastic_integral} to second term and find
	    \begin{multline}\label{equation: proof of finite pth moment-1}
	    		\le 2^\frac{p}{2} K_p D^\frac{p}{2} \left(\mathbb{E}( \int_0^T (1+\|X^{n-1}_s\|^2) ds)^\frac{p}{2})\right) \\
	    		+ 2^\frac{p}{2} K_p \mathfrak{C}_\frac{p}{2} \left(\mathbb{E}( \int_0^T (1+\|X^{n-1}_s\|^2) ds)^\frac{p}{2})\right) \\
	    		+ 2^\frac{p}{2} K_p \mathfrak{C}_\frac{p}{2} \mathbb{E} \int_0^T (1+\|X^{n-1}_s\|^p) ds) \\
	    \end{multline}
	    By Hypothesis~\ref{main_hypothesis} (c) we find,
	    \begin{multline}\label{equation: proof of finite pth moment-4}
	    		\le 2^\frac{p}{2} K_p \left((D\int_0^T \mathbb{E}\|X^{n-1}_s\|^2 ds)^\frac{p}{2}\right. \\
	    		\left. + \mathfrak{C}_\frac{p}{2} \left( \left( D\int_0^T \mathbb{E}\|X^{n-1}_s\|^2 ds \right)^\frac{p}{2} + D \left( \int_0^T \mathbb{E}\|X^{n-1}_s\|^p ds \right) \right) \right)\\
	    		  \le C_1 \left( (\int_0^T \mathbb{E}\|X^{n-1}_s\|^2 ds)^\frac{p}{2} \right) + C_2 \left( \int_0^T \mathbb{E}\|X^{n-1}_s\|^p ds \right) 
	    \end{multline}
	    where $C_1=2^\frac{p}{2} K_p D (1+\mathfrak{C}_\frac{p}{2})$ and $C_2=2^\frac{p}{2} K_p \mathfrak{C}_\frac{p}{2} D$, now by Holder inequality we find,
	    \begin{multline}\label{equation: proof of finite pth moment-5}
	    		\le C_3 \left( \int_0^T \mathbb{E}\|X^{n-1}_s\|^p ds \right) 
	    \end{multline}
	    which is finite by induction. The basis of induction follows directly from Hypothesis~\ref{main_hypothesis}-(e).
	\end{proof}

    \begin{lemma}\label{lemma: convergence_pth_moment}
        For $0\le t\le T$,
        \begin{equation}\label{equation: proof of existence_supremum convergence}
            \mathbb{E} \|X^{n+1}_t-X^n_t\|^p \le C_0 C_1^n \frac{t^n}{n!}
        \end{equation}
        where $C_0$ and $C_1$ are constants that are introduced below.
    \end{lemma}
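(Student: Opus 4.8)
The plan is to derive a linear recursive integral inequality for $u_n(t) := \mathbb{E}\|X^{n+1}_t - X^n_t\|^p$ relating $u_n$ to $u_{n-1}$, and then iterate it to produce the factorial decay. By Lemma~\ref{lemma: alpha=0} we may assume $\alpha = 0$, so the semigroup is a contraction and all the $e^{p\alpha(t-s)}$ factors appearing in Theorem~\ref{theorem: Ito type inequality for pth power} equal $1$.

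First I would set $Y^n_t := X^{n+1}_t - X^n_t$. Subtracting the iteration identity~\eqref{equation: proof of existence_iteration} at levels $n+1$ and $n$ gives $Y^n_0 = 0$ and
\[ Y^n_t = \int_0^t S_{t-s}\big(f(s,X^{n+1}_s)-f(s,X^n_s)\big)\,ds + \int_0^t\int_E S_{t-s}\,h^n(s,\xi)\,\tilde N(ds,d\xi), \]
where $h^n(s,\xi) := k(s,\xi,X^n_{s-})-k(s,\xi,X^{n-1}_{s-})$. This is exactly of the form $X(t)=S_t X_0 + \int_0^t S_{t-s}\,dZ(s)$ with $X_0 = 0$, continuous finite-variation part $V$ given by the drift integral and pure-jump martingale part $M$ given by the Poisson integral. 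The moment bounds of Lemma~\ref{lemma: finite_pth_moment_iteration} together with Hypothesis~\ref{main_hypothesis}-(c),(d) guarantee the integrability hypotheses $\mathbb{E}[M]_T^{p/2}<\infty$ and $\mathbb{E}|V|_T^p<\infty$ needed to apply Theorem~\ref{theorem: Ito type inequality for pth power}.

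Next I would apply the It\^o type inequality to $Y^n_t$. Since $M$ is a pure-jump martingale, $[M]^c \equiv 0$, so the quadratic-variation term drops out. Taking expectations, the martingale term $p\int_0^t\|Y^n_{s^-}\|^{p-2}\langle Y^n_{s^-}, dM(s)\rangle$ has zero mean, and the jump sum converts, via the compensation formula for $N$ (whose compensator is $\nu(d\xi)\,ds$), into
\[ \mathbb{E}\int_0^t\int_E\Big(\|Y^n_{s^-}+h^n(s,\xi)\|^p - \|Y^n_{s^-}\|^p - p\|Y^n_{s^-}\|^{p-2}\langle Y^n_{s^-}, h^n(s,\xi)\rangle\Big)\nu(d\xi)\,ds. \]
For the drift term I would use the semi-monotonicity Hypothesis~\ref{main_hypothesis}-(a) to bound $\langle f(s,X^{n+1}_s)-f(s,X^n_s), Y^n_s\rangle \le M\|Y^n_s\|^2$, yielding a contribution $pM\int_0^t\mathbb{E}\|Y^n_s\|^p\,ds$. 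For the jump integrand I would invoke the elementary inequality, valid for $p\ge 2$, that $\|a+h\|^p-\|a\|^p-p\|a\|^{p-2}\langle a,h\rangle \le c_p(\|a\|^{p-2}\|h\|^2+\|h\|^p)$, and then apply Hypotheses~(b) and~(d) to get $\int_E\|h^n\|^2\nu(d\xi)\le C\|Y^{n-1}_s\|^2$ and $\int_E\|h^n\|^p\nu(d\xi)\le F\|Y^{n-1}_s\|^p$. A Young inequality applied to $\|Y^n_s\|^{p-2}\|Y^{n-1}_s\|^2$ separates the self-term from the cross-term. Collecting everything produces the recursion
\[ u_n(t) \le A\int_0^t u_n(s)\,ds + B\int_0^t u_{n-1}(s)\,ds \]
for suitable nonnegative constants $A,B$ depending only on $p,M,C,F,c_p$.

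Finally I would close the argument by Gronwall and iteration. Gronwall's inequality applied to the self-term gives $u_n(t)\le Be^{AT}\int_0^t u_{n-1}(s)\,ds =: C_1\int_0^t u_{n-1}(s)\,ds$ on $[0,T]$. Iterating this $n$ times reduces the estimate to $u_0$, and since Lemma~\ref{lemma: finite_pth_moment_iteration} yields $u_0(t)\le 2^p(\mathbb{E}\sup_t\|X^1_t\|^p + \mathbb{E}\sup_t\|X^0_t\|^p) =: C_0 < \infty$, the nested time integrals produce $u_n(t)\le C_0 C_1^n t^n/n!$, which is the claim. The main obstacle, and where care is required, is the treatment of the jump term: verifying that the Poisson stochastic integral in the It\^o inequality is a genuine martingale (a localization argument may be needed, with the moment bounds justifying the passage to the limit) so that its mean vanishes, correctly passing from the pathwise jump sum to the $\nu$-compensated integral, and controlling the resulting integrand by the elementary $p$th-power inequality so that Hypotheses~(b) and~(d) can feed $\|Y^{n-1}_s\|$ back into the recursion.
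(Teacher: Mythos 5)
Your proposal is correct and follows essentially the same route as the paper's own proof: the same decomposition of $X^{n+1}_t-X^n_t$, the It\^o type inequality (Theorem~\ref{theorem: Ito type inequality for pth power}) with the drift handled by semi-monotonicity, the jump term bounded by the elementary $p$th-power inequality (the paper's Lemma~\ref{lemma: inequality in proof of Ito-type pth power}) together with Hypotheses (b) and (d) and Young's inequality, and finally Gronwall plus iteration to get the factorial bound. Your explicit attention to the integrability/localization needed for the martingale term is a point the paper glosses over, but it does not change the argument.
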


    \begin{proof}
        We prove by induction on $n$. Assume that the statement is proved for $n-1$. We have,
        \begin{equation}\label{equation: proof of existence_X^n+1-X^n}
            X^{n+1}_t-X^n_t= \int_0^t S_{t-s}(f(s,X^{n+1}_s)-f(s,X^n_s)) ds + \int_0^t S_{t-s} dM_s,
        \end{equation}
        where
        \begin{eqnarray*}
            M_t &=&\int_0^t \int_E {(k(s,\xi,X^{n}_{s-})-k(s,\xi,X^{n-1}_{s-}))\tilde{N}(ds,d\xi)}.
        \end{eqnarray*}
        Applying Theorem~\ref{theorem: Ito type inequality for pth power}, for $\alpha=0$, we have
        \begin{multline}\label{equation:proof of existence 1}
            \lVert X^{n+1}_t-X^n_t \rVert ^p \le \\
            p \underbrace{\int_0^t {\|X^{n+1}_s-X^n_s\|^{p-2} \langle X^{n+1}_{s}-X^n_{s},f(s,X^{n+1}_s)-f(s,X^n_s)\rangle ds}}_{A^n_t}\\
            + p \underbrace{\int_0^t \|X^{n+1}_{s-}-X^n_{s-}\|^{p-2} \langle X^{n+1}_{s-}-X^n_{s-},dM_s\rangle}_{B^n_t}\\
		  + \frac{1}{2}p(p-1)  \underbrace{ \int_0^t \|X^{n+1}_{s-}-X^n_{s-}\|^{p-2} d[M]^c_s}_{C^n_t} + \int_0^t \int_E D^n_s N(ds,d\xi)
        \end{multline}
where
	\begin{multline*}
		D^n_s = \|X^{n+1}_{s-}-X^n_{s-} + k(s,\xi,X^n_{s-})-k(s,\xi,X^{n-1}_{s-})\|^p - \|X^{n+1}_{s-}-X^n_{s-}\|^p \\
		- p \|X^{n+1}_{s-}-X^n_{s-}\|^{p-2} \langle X^{n+1}_{s-}-X^n_{s-},k(s,\xi,X^n_{s-})-k(s,\xi,X^{n-1}_{s-})\rangle
	\end{multline*}
        Note that for a c\`adl\`ag function the set of discontinuity points is countable, hence when integrating with respect to Lebesgue measure, they can be neglected. Therefore from now on, we neglect the left limits in integrals with respect to Lebesgue measure. So, for the term $A_t$, the semimonotonicity assumption on $f$ implies
        \begin{equation}\label{equation:proof of existence_A_t}
            A^n_t \le M \int_0^t \|X^{n+1}_s-X^n_s\|^p ds
        \end{equation}
        We also have
        \[ [M]^c_t =0 \]
        and hence
        \begin{equation*}
           C^n_t =0
        \end{equation*}
		For the term $D^n_s$ we have by Lemma~\ref{lemma: inequality in proof of Ito-type pth power} (proved later),
		\begin{multline*}
			D^n_s \le \frac{1}{2} p (p-1) \Bigg(  \|X^{n+1}_{s-}-X^n_{s-}\|^{p-2} \\
			+ \|k(s,\xi,X^n_{s-})-k(s,\xi,X^{n-1}_{s-})\|^{p-2} \Bigg) \|k(s,\xi,X^n_{s-})-k(s,\xi,X^{n-1}_{s-})\|^2
		\end{multline*}
		Hence by Hypothesis~\ref{main_hypothesis}-(b) and (d),
		\begin{multline}\label{equation:proof of existence_E(int D_s)}
        	\mathbb{E} \int_E D^n_s \nu(d\xi) \le \frac{1}{2} p (p-1) \Bigg(  C \mathbb{E} \left( \|X^{n+1}_{s-}-X^n_{s-}\|^{p-2}  \|X^{n}_s-X^{n-1}_s\|^2 \right) \\
        	+ F  \mathbb{E} \left( \|X^{n}_{s-}-X^{n-1}_{s-}\|^{p}\right) \Bigg)
        \end{multline}

        Now, taking expectations on both sides of~\eqref{equation:proof of existence 1} and substituting~\eqref{equation:proof of existence_A_t} and~\eqref{equation:proof of existence_E(int D_s)} and noting that $B_t$ is a martingale we find,
        \begin{multline*}
        	\mathbb{E} \|X^{n+1}_t-X^n_t\|^p \le p M \int_0^t \mathbb{E}\|X^{n+1}_s-X^n_s\|^p ds \\
        	+ \frac{1}{2} p(p-1)C \int_0^t \mathbb{E}\left(\|X^{n+1}_s-X^n_s\|^{p-2}\|X^n_s-X^{n-1}_s\|^2\right)ds\\
        	+ \frac{1}{2} p (p-1) F \mathbb{E} \|X^n_s-X^{n-1}_s\|^p ds.
        \end{multline*}
        Applying Holder's inequality to the second integral in the right hand side we find
        \begin{multline*}
        	\le p M \int_0^t \mathbb{E}\|X^{n+1}_s-X^n_s\|^p ds \\
        	+ \frac{1}{2} p(p-1)C \left( \frac{p-2}{p}\int_0^t \mathbb{E}\|X^{n+1}_s-X^n_s\|^ ds + \frac{2}{p}\int_0^t \mathbb{E}\|X^n_s-X^{n-1}_s\|^p ds \right)\\
        	+ \frac{1}{2} p (p-1) F \mathbb{E} \|X^n_s-X^{n-1}_s\|^p ds\\
        	\le \beta  \int_0^t \mathbb{E}\|X^{n+1}_s-X^n_s\|^p ds + \gamma \int_0^t \mathbb{E}\|X^n_s-X^{n-1}_s\|^p ds
        \end{multline*}
        where $\beta= pM+ \frac{1}{2} (p-1)(p-2) C$ and $\gamma=\frac{1}{2} (p-1) (2C + pF)$.

	Define $h^n(t)=\mathbb{E}\|X^{n+1}_t-X^n_t\|^p$. We have
			\[ h^n(t) \le \beta \int_0^t h^n(s) ds + \gamma \int_0^t h^{n-1}(s) ds \]
       By Lemma~\ref{lemma: finite_pth_moment_iteration} we know that $h^n(t)$ is uniformly bounded with respect to $t$, hence we can use Gronwall's inequality and find
       	\begin{equation*}
            h^n(t)\le \gamma e^{\beta t} \int_0^t h^{n-1}(s) ds
        \end{equation*}
        We have $h^0(t)\le C_0$ where $C_0=2^p \mathbb{E} \sup_{0\le t \le T} (\|X^1_t\|^p + \|X^0_t\|^p)<\infty$ and it follows inductively that,
        	\[ h^n(t) \le C_0 C_1^n \frac{t^n}{n!} \]
        where $C_1=\gamma e^{\beta T}$.
       \end{proof}

    Back to the proof of Theorem~\ref{theorem: existence in L^p}, since the right hand side of~\eqref{equation: proof of existence_supremum convergence} is a convergent series, $\{X^n\}$ is a cauchy sequence in $L^p(\Omega,\mathcal{F},\mathbb{P};L^\infty([0,T];H))$ and hence converges to a process $Y_t(\omega)$. But as is proved in \cite{Salavati_Zangeneh:existence and uniqueness}, $\{X^n\}$ converges to a process $X_t$ in $L^2(\Omega,\mathcal{F},\mathbb{P};L^\infty([0,T];H))$ which is a solution of equation~\eqref{main_equation}. Hence $Y_t=X_t$.
\end{proof}

In the above proof, we have used the following lemma,
\begin{lemma}\label{lemma: inequality in proof of Ito-type pth power}
            For $x,y\in H$ we have
                \[ \|x+y\|^p-\|x\|^p -p\|x\|^{p-2}\langle x , y \rangle \le \frac{1}{2}p(p-1) (\|x\|^{p-2} +\|x+y\|^{p-2})\|y\|^2 \]
\end{lemma}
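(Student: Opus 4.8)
The plan is to read the left-hand side as the second-order Taylor remainder of the scalar function $\psi(\theta)=\|x+\theta y\|^p$ on $[0,1]$, and to control that remainder via Cauchy--Schwarz. Since $\psi(0)=\|x\|^p$, $\psi(1)=\|x+y\|^p$ and $\psi'(0)=p\|x\|^{p-2}\langle x,y\rangle$, Taylor's formula with integral remainder gives
\[
\|x+y\|^p-\|x\|^p-p\|x\|^{p-2}\langle x,y\rangle=\psi(1)-\psi(0)-\psi'(0)=\int_0^1(1-\theta)\,\psi''(\theta)\,d\theta,
\]
so everything reduces to a pointwise bound on $\psi''$ and an elementary estimate of the resulting integral.

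Next I would compute the second derivative. Writing $z_\theta=x+\theta y$, differentiation of $\psi'(\theta)=p\|z_\theta\|^{p-2}\langle z_\theta,y\rangle$ yields
\[
\psi''(\theta)=p(p-2)\|z_\theta\|^{p-4}\langle z_\theta,y\rangle^2+p\|z_\theta\|^{p-2}\|y\|^2 .
\]
Applying the Cauchy--Schwarz inequality $\langle z_\theta,y\rangle^2\le\|z_\theta\|^2\|y\|^2$ to the first term (and using $p\ge 2$, so $p-2\ge 0$) collapses this to the clean bound $\psi''(\theta)\le p(p-1)\|z_\theta\|^{p-2}\|y\|^2$. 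Substituting into the remainder gives
\[
\int_0^1(1-\theta)\,\psi''(\theta)\,d\theta\le p(p-1)\|y\|^2\int_0^1(1-\theta)\,\|x+\theta y\|^{p-2}\,d\theta .
\]

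The last step is to estimate $\int_0^1(1-\theta)\|x+\theta y\|^{p-2}\,d\theta$. Since $x+\theta y=(1-\theta)x+\theta(x+y)$, convexity of the norm gives $\|x+\theta y\|\le(1-\theta)\|x\|+\theta\|x+y\|\le\max(\|x\|,\|x+y\|)$, and because $p-2\ge 0$ this yields $\|x+\theta y\|^{p-2}\le\|x\|^{p-2}+\|x+y\|^{p-2}$ uniformly in $\theta$. With $\int_0^1(1-\theta)\,d\theta=\tfrac12$ we obtain exactly
\[
\int_0^1(1-\theta)\,\|x+\theta y\|^{p-2}\,d\theta\le\tfrac12\bigl(\|x\|^{p-2}+\|x+y\|^{p-2}\bigr),
\]
which, fed into the previous display, produces the claimed inequality. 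This $\max$-bound is what lets me sidestep any case distinction between $2\le p\le 3$ and $p\ge 3$, where the map $s\mapsto s^{p-2}$ switches between concave and convex.

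The one point requiring care, and the main obstacle, is the regularity of $\psi$: the formula for $\psi''$ involves $\|z_\theta\|^{p-4}$, which is singular where $z_\theta=0$ (possible only when $y$ is parallel to $x$, at a single $\theta$). I would handle this by the standard regularization $\phi_\varepsilon(w)=(\|w\|^2+\varepsilon)^{p/2}$, which is smooth, proving the inequality with $\psi_\varepsilon(\theta)=\phi_\varepsilon(x+\theta y)$ in place of $\psi$ (the Cauchy--Schwarz and $\max$ arguments go through verbatim, since $\|z_\theta\|^2+\varepsilon$ only strengthens the monotonicity needed), and then letting $\varepsilon\downarrow 0$ by dominated convergence; all quantities are continuous in $\varepsilon$ at $\varepsilon=0$ for $p\ge 2$. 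Alternatively one notes that for $p\ge 2$ the map $\psi'$ is Lipschitz on $[0,1]$, so it is absolutely continuous and the integral-remainder form of Taylor's theorem applies directly with $\psi''$ defined almost everywhere.
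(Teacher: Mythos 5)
Your proposal is correct and follows essentially the same route as the paper's own proof: Taylor expansion of $\theta\mapsto\|x+\theta y\|^p$, Cauchy--Schwarz to reduce $\psi''$ to $p(p-1)\|x+\theta y\|^{p-2}\|y\|^2$, and the convexity bound $\|x+\theta y\|\le\max(\|x\|,\|x+y\|)$, the only differences being that you use the integral form of the remainder where the paper uses the Lagrange form, and that you explicitly patch the smoothness issue at points where $x+\theta y=0$ (which the paper passes over in silence). Your extra regularization step is sound but not a different method.
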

\begin{proof}
    Define $f(t)= \|x+ty\|^p$. Then
        \[ f'(t)= p\|x+ty\|^{p-2}\langle x+ty , y \rangle \]
    and
    \begin{multline*}
    f''(t)= p\|x+ty\|^{p-2}\|y\|^2 +p(p-2)\|x+ty\|^{p-4} \langle x+ty , y \rangle ^2 \le \\
    p(p-1)\|x+ty\|^{p-2}\|y\|^2
    \end{multline*}
    By Taylor's remainder theorem we have for some $\tau\in [0,1]$,
        \[ f(1)-f(0)-f'(0)=\frac{1}{2}f''(\tau) \le \frac{1}{2} p(p-1)\|x+\tau y\|^{p-2}\|y\|^2 \]
    But $\|x+\tau y\|\le \max(\|x\|,\|x+y\|)$. Hence
        \[ f(1)-f(0)-f'(0) \le \frac{1}{2}p(p-1) (\|x\|^{p-2} +\|x+y\|^{p-2})\|y\|^2 \]
    which completes the proof.
\end{proof}

\begin{theorem} [Continuous dependence and Exponential Stability in the $p$th Moment]\label{theorem: stability}
    Let $X_t$ and $Y_t$ be mild solutions of~\eqref{main_equation} with initial conditions $X_0$ and $Y_0$. Then
    \begin{eqnarray*}
        \mathbb{E} \| X_t-Y_t \| ^p &\le&  e^{\gamma t} \mathbb{E}\|X_0-Y_0\|^p
    \end{eqnarray*}
    where $\gamma= p \alpha + p M+\frac{1}{2}p(p-1) C+ \frac{1}{2} p (p-1) ((2^{p-2}+1) C + 2^{p-2} F)$. Hence the mild solution is $L^p$ continuous with respect to initial conditions. Moreover, if $\gamma < 0$ then all mild solutions are exponentially stable in the $L^p$ norm.
\end{theorem}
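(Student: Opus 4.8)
The plan is to apply the It\^o type inequality for the $p$th power (Theorem~\ref{theorem: Ito type inequality for pth power}) to the difference $U_t:=X_t-Y_t$, exactly as in the convergence argument of Lemma~\ref{lemma: convergence_pth_moment} but now comparing two genuine solutions rather than successive Picard iterates. Subtracting the two mild representations~\eqref{mild_solution}, $U_t$ is again a stochastic convolution $U_t=S_tU_0+\int_0^t S_{t-s}\,dZ_s$ with $U_0=X_0-Y_0$, finite-variation part $V_s=\int_0^s(f(r,X_r)-f(r,Y_r))\,dr$, and martingale part $M_s=\int_0^s\int_E(k(r,\xi,X_{r-})-k(r,\xi,Y_{r-}))\,\tilde N(dr,d\xi)$. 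Since both solutions have finite $p$th moments by Theorem~\ref{theorem: existence in L^p}, Hypotheses~\ref{main_hypothesis}-(c),(d) furnish the integrability $\mathbb{E}[M]_T^{p/2}<\infty$ and $\mathbb{E}|V|_T^p<\infty$ needed to invoke the inequality, and I would not reduce to $\alpha=0$ but keep the factors $e^{p\alpha(t-s)}$. Because the noise is purely discontinuous, $[M]^c\equiv0$, so the continuous-quadratic-variation term disappears.

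I would then estimate term by term and take expectations. The finite-variation term is controlled by the semimonotonicity in Hypothesis~\ref{main_hypothesis}-(a), $\langle U_s,f(s,X_s)-f(s,Y_s)\rangle\le M\|U_s\|^2$, contributing at most $pM\int_0^t e^{p\alpha(t-s)}\mathbb{E}\|U_s\|^p\,ds$. The stochastic-integral term $p\int_0^t e^{p\alpha(t-s)}\|U_{s-}\|^{p-2}\langle U_{s-},dM_s\rangle$ is a martingale and drops out in expectation, after a routine localization upgrading it from a local to a true martingale using the finite moments above.

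The delicate term is the jump sum, which I would write as $\int_0^t\int_E e^{p\alpha(t-s)}D_s\,N(ds,d\xi)$, where $D_s$ is the second-order remainder evaluated at $x=U_{s-}$ and $y=k(s,\xi,X_{s-})-k(s,\xi,Y_{s-})$; taking expectations replaces $N$ by its compensator $\nu(d\xi)\,ds$. Here Lemma~\ref{lemma: inequality in proof of Ito-type pth power} gives $D_s\le\frac12 p(p-1)(\|U_{s-}\|^{p-2}+\|U_{s-}+y\|^{p-2})\|y\|^2$. The first piece integrates, via the $L^2$-Lipschitz Hypothesis~\ref{main_hypothesis}-(b), to $C\|U_{s-}\|^p$; for the second I would split $\|U_{s-}+y\|^{p-2}\le 2^{p-2}(\|U_{s-}\|^{p-2}+\|y\|^{p-2})$ and apply Hypothesis~\ref{main_hypothesis}-(b) to the resulting $\|U_{s-}\|^{p-2}\|y\|^2$ part and the $L^p$-Lipschitz Hypothesis~\ref{main_hypothesis}-(d) to the remaining $\|y\|^p$ part, so that $\int_E D_s\,\nu(d\xi)\le\frac12 p(p-1)((2^{p-2}+1)C+2^{p-2}F)\|U_{s-}\|^p$. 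Collecting these with the drift constant $pM$ and the semigroup constant $p\alpha$ produces the exponent $\gamma$.

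Finally, writing $g(t)=\mathbb{E}\|U_t\|^p$, the accumulated estimate has the form $g(t)\le e^{p\alpha t}g(0)+(\gamma-p\alpha)\int_0^t e^{p\alpha(t-s)}g(s)\,ds$. Setting $\phi(t)=e^{-p\alpha t}g(t)$ turns this into $\phi(t)\le g(0)+(\gamma-p\alpha)\int_0^t\phi(s)\,ds$, whence Gronwall's inequality yields $\phi(t)\le g(0)e^{(\gamma-p\alpha)t}$, i.e. $\mathbb{E}\|X_t-Y_t\|^p\le e^{\gamma t}\mathbb{E}\|X_0-Y_0\|^p$; $L^p$ continuous dependence is then immediate, and $\gamma<0$ forces exponential decay of $\mathbb{E}\|X_t-Y_t\|^p$. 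I expect the jump estimate to be the main obstacle: engineering the elementary convexity split so as to bring in simultaneously both the $L^2$ and the $L^p$ Lipschitz hypotheses with the stated constant, and rigorously justifying the passage to expectation, namely the compensation of $N$ and the localization making the martingale term integrable.
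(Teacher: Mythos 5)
Your proposal is correct and follows essentially the same route as the paper: apply the It\^o type inequality (Theorem~\ref{theorem: Ito type inequality for pth power}) to $X_t-Y_t$, kill the drift term with semimonotonicity, drop the martingale term in expectation, control the jump sum via Lemma~\ref{lemma: inequality in proof of Ito-type pth power} with the split $\|x+y\|^{p-2}\le 2^{p-2}(\|x\|^{p-2}+\|y\|^{p-2})$ feeding Hypotheses~\ref{main_hypothesis}-(b) and (d), and finish with Gronwall. Two differences are worth recording. First, the paper reduces to $\alpha=0$ by the rescaling of Lemma~\ref{lemma: alpha=0} and only reinstates $\alpha$ at the end, whereas you keep the weights $e^{p\alpha(t-s)}$ and run a weighted Gronwall argument with $\phi(t)=e^{-p\alpha t}\mathbb{E}\|X_t-Y_t\|^p$; both work, and yours avoids the (harmless) change of variables. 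Second, and more substantively, you correctly observe that the martingale $M$ is purely discontinuous, so $[M]^c\equiv 0$ and the It\^o-correction term vanishes; the paper instead retains a term $\mathbf{C}_t=\int_0^t\|X_s-Y_s\|^{p-2}\,d[M]_s$ (with the full bracket rather than its continuous part) and estimates it by Hypothesis~\ref{main_hypothesis}-(b), which is what contributes the summand $\frac{1}{2}p(p-1)C$ to the stated $\gamma$. Consequently your argument yields the sharper exponent $\gamma'=p\alpha+pM+\frac{1}{2}p(p-1)\bigl((2^{p-2}+1)C+2^{p-2}F\bigr)\le\gamma$, so the theorem as stated follows a fortiori; just note that your closing claim that the collected terms ``produce the exponent $\gamma$'' is literally an overstatement --- they produce $\gamma'$, and one then uses $e^{\gamma' t}\le e^{\gamma t}$ for $t\ge 0$.
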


\begin{proof}
	    First we consider the case that $\alpha=0$. Subtract $X_t$ and $Y_t$,
    \begin{multline*}
        X_t-Y_t=S_t (X_0-Y_0)
        + \int_0^t S_{t-s} (f(s,X_s)-f(s,Y_s))ds
        + \int_0^t S_{t-s} dM_s,
    \end{multline*}
    where
    \[ M_t=\int_E (k(s,\xi,X_{s-})-k(s,\xi,Y_{s-}))d\tilde{N}. \]
    Applying It\^o type Inequality for $p$th power (Theorem~\ref{theorem: Ito type inequality for pth power}), for $\alpha=0$, to $X_t-Y_t$ and rewriting it with respect to random Poisson measure, we find
    \begin{multline}\label{equation:proof of stability 1}
        \| X_t-Y_t \| ^p \le \| X_0-Y_0 \| ^p \\
        +  p \underbrace {\int_0^t {\|X_s-Y_s\|^{p-2} \langle X_{s-}-Y_{s-} , (f(s,X_s)-f(s,Y_s))\rangle ds}}_\mathbf{A_t}\\
        + p \underbrace {\int_0^t {\|X_s-Y_s\|^{p-2} \langle X_{s-}-Y_{s-} , d M_s \rangle}}_\mathbf{B_t} \\
        +\frac{1}{2}p(p-1)\underbrace{\int_0^t \|X_s-Y_s\|^{p-2}d[M]_s}_\mathbf{C_t}        + \int_0^t \int_E  \mathbf{D_s} N(ds,d\xi)
    \end{multline}
where
    \begin{multline*}
        \mathbf{D_s}=\|X_{s-}-Y_{s-}+k(s,\xi,X_{s-})-k(s,\xi,Y_{s-})\|^p-\|X_{s-}-Y_{s-}\|^p \\
        -p\|X_{s-}-Y_{s-}\|^{p-2} \langle X_{s-}-Y_{s-},k(s,\xi,X_{s-})-k(s,\xi,Y_{s-}) \rangle .
    \end{multline*}
	
	Using Hypothesis~\ref{main_hypothesis} (a) for term $\mathbf{A}_t$ we find
	\begin{equation}\label{equation:proof of stability 2}
		\mathbb{E}\mathbf{A}_t \le M \int_0^t \mathbb{E} \|X_s-Y_s\|^p ds
	\end{equation}
	Using Hypothesis~\ref{main_hypothesis} (b) for term $\mathbf{C}_t$ we find
	\begin{equation}\label{equation:proof of stability 3}
		\mathbb{E}\mathbf{C}_t \le C \int_0^t \mathbb{E} \|X_s-Y_s\|^p ds
	\end{equation}
	
	For term $\mathbf{D}_s$ we have by Lemma~\ref{lemma: inequality in proof of Ito-type pth power},
	\begin{multline*}
		\mathbf{D}_s \le \frac{1}{2} p (p-1) \Big( \|X_{s-}-Y_{s-}\|^{p-2}
		+ \|X_{s-}-Y_{s-}+k(s,\xi,X_{s-})-k(s,\xi,Y_{s-})\|^{p-2}\Big)\\
		\|k(s,\xi,X_{s-})-k(s,\xi,Y_{s-})\|^2\\
		\le \frac{1}{2} p (p-1) \Big((2^{p-2}+1)\|X_{s-}-Y_{s-}\|^{p-2} + 2^{p-2} \|k(s,\xi,X_{s-})-k(s,\xi,Y_{s-})\|^{p-2}\Big)\\
		\|k(s,\xi,X_{s-})-k(s,\xi,Y_{s-})\|^2
	\end{multline*}
	Using Hypothesis~\ref{main_hypothesis} (b) and (d), we find
	\begin{multline}\label{equation:proof of stability 4}
		\mathbb{E} \int_E \mathbf{D}_s \nu(d\xi) ds \le \frac{1}{2} p (p-1) ((2^{p-2}+1) C + 2^{p-2} F)\mathbb{E} \|X_{s-}-Y_{s-}\|^p
	\end{multline}
    Taking expectations on both sides of~\eqref{equation:proof of stability 1} and noting that $\mathbf{B}_t$ is a martingale and substituting~\eqref{equation:proof of stability 2}, ~\eqref{equation:proof of stability 3} and ~\eqref{equation:proof of stability 4} we find
    
    \begin{equation*}
    	\mathbb{E} \|X_t-Y_t\|^p \le \mathbb{E} \|X_0-Y_0\|^p + \gamma \int_0^t \mathbb{E} \|X_s-Y_s\|^p ds \\
    \end{equation*}
    where $\gamma=p M+\frac{1}{2}p(p-1) C+ \frac{1}{2} p (p-1) ((2^{p-2}+1) C + 2^{p-2} F)$.
	Now applying Gronwall's inequality the statement follows. Hence the proof for the case $\alpha=0$ is complete. Now for the general case, apply the change of variables used in Lemma~\ref{lemma: alpha=0}.

\end{proof}

\begin{remark}
	 The results of this section remain valid by adding a Wiener noise term to equation~\eqref{main_equation}. i.e. for the equation
\begin{equation}\label{Wiener_noise}
    dX_t=AX_t dt+f(t,X_t) dt + g(t,X_{t-})d W_t + \int_E k(t,\xi,X_{t-}) \tilde{N}(dt,d\xi),
\end{equation}
	where $W_t$ is a cylindrical Wiener process on a Hilbert space $K$, independent of $N$ and $g(t,x,\omega):\mathbb{R}^+\times H\times \Omega \to L_{HS}(K,H)$ (Space of Hilbert-Schmidt operators from $K$ to $H$) is Lipschitz and has linear growth. The proofs are straightforward generalizations of the proofs of this section.
\end{remark}

\end{document}